\newtheorem{theorem}{Theorem}
\newtheorem{lemma}{Lemma}
\newtheorem{cor}{Corollary}
\theoremstyle{definition}
\newtheorem{exam}[theorem]{Example}
\def\min{\mathop{\mathrm{min}}}
\newcommand{\C}{\mathbb{C}}
\newcommand{\N}{\mathbb{N}}
\newcommand{\PP}{\mathbb{P}}
\newcommand{\ax}{\rightarrow }
\begin{document}
\title{{On Nevanlinna - Cartan theory for holomorphic curves with Tsuji characteristics }}

\author{Nguyen Van Thin}
\address{Department of Mathematics, Thai Nguyen University of Education, 
Luong Ngoc Quyen  street, Thai Nguyen city, Viet Nam.}
\email{thinmath@gmail.com}

\thanks{2010 {\it Mathematics Subject Classification.} Primary 32H30.}
\thanks{Key words: Algebraic variety, General position, Hypersurface, Nevanlinna theory, Tsuji characteristics.}

\begin{abstract}
In this paper, we prove some fundamental theorems for holomorphic curves on
 $\overline \Omega(\alpha, \beta),$ $\Omega(\alpha, \beta)$ intersecting a hypersurface, finite set of fixed hyperplanes in general position
 and finite set of fixed hypersurfaces in general position on complex projective variety with  the level of truncation. As applications
 of the second main theorems for an angle, we will discuss the uniqueness problem of holomorphic curves in an angle instead of the whole complex
plane. Detail, we establish a result for uniqueness problem of holomorphic curve by inverse image of a hypersurface. In my knowledge, this is the first 
result for uniqueness problem of holomorphic curve by inverse image of hypersurface on angular domain. When
  $\Omega(\alpha,\beta)=\mathbb C,$ we obtain a uniqueness result for holomorphic curves, it is improvement of some results 
 before \cite{DR, P} in this trend.
\end{abstract}
\baselineskip=16truept 
\maketitle 
\pagestyle{myheadings}
\markboth{}{}

\section{ Introduction and main results}

We denote $\Omega(\alpha, \beta)=\{z: \alpha<argz<\beta\}$ by the angle on complex
 plane, where $0<\beta-\alpha\le 2\pi.$ Then, $\Omega(\alpha, \beta)$ is called an angular domain on complex plane. The
  Nevanlinna second main theorem for an angle was used in \cite{MT, DH, GO, GV, WJ, JH2}, and \cite{JH} to investigate the growth of meromorphic functions with some radially
distributed values. The usage of the second main theorem produces a basic and elementary method in the topic \cite{JH2}. 
In \cite{JH}, in view of the Tsuji second main theorem, we established a five-value uniqueness theorem and four-value 
uniqueness theorem for meromorphic functions in an angle. In 2015, J. Zheng \cite{JH1} established the value distribution of holomorphic curves  on an angular domain from the point of view of potential theory and established the first and second fundamental theorems corresponding to those theorems of Ahlfors-Shimizu, Nevanlinna, and Tsuji on meromorphic functions in an angular domain.
We refer readers to \cite{JH1} for comments on the results of the value distribution of holomorphic curves 
on an angular domain. These results motivate us to consider the case of holomorphic curves on
 $\overline \Omega(\alpha, \beta),$ $\Omega(\alpha, \beta)$ intersecting hypersurfaces. In this paper, we prove the fundamental theorems for holomorphic mappings from  
$\overline \Omega(\alpha, \beta)$, $\Omega(\alpha, \beta)$ to $\mathbb P^n(\mathbb C)$ intersecting a hypersurface, 
 finite set of fixed hyperplanes in general position and finite set of fixed hypersurfaces in general position on complex projective variety with the level of
  truncation and the Nevanlinna functions have the form of Tsuji characteristics.

We denote by $k=\dfrac{\pi}{\beta-\alpha},$ and for any pair of real numbers $\alpha$ and $\beta$ in $[0, 2\pi)$ with 
$0<\beta-\alpha\le 2\pi,$
$$\Xi(\alpha, \beta;r)=\{z=te^{i\theta}: \alpha<\theta<\beta, 1<t\le r(sin(k(\theta-\alpha)))^{1/k}\}.$$
 Let $ f: \Omega(\alpha, \beta) \ax \PP^n(\C)$ be a
 holomorphic curve. Let ${\large \text{f}}=(f_0:\dots:f_n)$ be a reduced representation
  of $f,$ where $f_{0},\dots,f_{n}$ are holomorphic functions and  without common zeros in $\Omega(\alpha, \beta).$  Set 
$||{\large \text{f}}(z)||=\max\{|f_0(z)|, \dots, |f_n(z)|\}.$ Let $D$ be a hypersurface in $\PP^n(\C)$ of degree $d$. Let $Q$ be the homogeneous polynomial of degree $d$ defining $D$. 
Under the assumption that $ Q(\large \text{f}) \not\equiv 0,$  the counting function
 $\mathfrak N_{\alpha \beta, f}(r, D)$   of $f$ with respect to $D$ is defined as
\begin{align*}
 \mathfrak N_{\alpha \beta, f}(r, D) &= k\int_{1}^{r}\dfrac{\mathfrak n_{\alpha\beta, f}(t, D)}{t^{k+1}}dt\\
&=\sum_{1<|a_n|<r(sin(k(\theta-\alpha)))^{1/k}}(\dfrac{sink(\alpha_n-\alpha)}{|a_n|^{k}}-\dfrac{1}{r^k}),
\end{align*}
where the $\mathfrak n_{\alpha \beta, f}(t, D)$ are the number zeros of $Q(\large \text{f})$ in the set $\Xi(\alpha, \beta;r)$ counting with  multiplicity
and $a_n=|a_n|e^{i\alpha_n}$ are zeros of $Q(\large \text{f})$ in the set $\Xi(\alpha, \beta;r).$

The {\it  proximity function} of $f$ on $\Omega(\alpha, \beta) $ with respect to $D$ is defined as following:
$$ \mathfrak m_{\alpha \beta, f}(r,D)=\dfrac{1}{2\pi}\int_{arcsin r^{-k}}^{\pi-arcsin r^{-k}} \log \dfrac 
{\|\large \text{f}(rsin^{k^{-1}}
\varphi e^{i(\alpha+k^{-1}\varphi)})\|^d}{|Q(\large \text{f})(rsin^{k^{-1}}\varphi e^{i(\alpha+k^{-1}\varphi)})|}
 \dfrac{d\varphi}{r^k sin^{2}\varphi}.$$
Now let $\delta$ be a positive integer, the {\it truncated  counting function} of $f$ is defined by
\begin{align*}
\mathfrak N_{\alpha \beta, f}^{\delta}(r,D)&=k\int_{1}^{r}\dfrac{\mathfrak n^{\delta}(t, D)}{t^{k+1}}dt\\
&=\sum_{1<|a_n|<r(sin(k(\theta-\alpha)))^{1/k},\; \min\{\text{ord}_{Q(\large \text{f})}(a_n), \delta\}}
(\dfrac{sink(\alpha_n-\alpha)}{|a_n|^{k}}-\dfrac{1}{r^k}),
\end{align*}
where the $\mathfrak n^{\delta}(t, D)$ are the number zeros of $Q(\large \text{f}),$  any zero of multiplicity greater than $\delta$ 
 of $Q(\large \text{f})$ 
in $\Xi(\alpha, \beta;r) $ is ``truncated" and counted as if it only had multiplicity $\delta.$ 

Let $ f: \overline \Omega(\alpha, \beta) \ax \PP^n(\C)$  be a holomorphic
 map. Let ${\large \text{f}}=(f_0:\dots:f_n)$ be a reduced representation
  of $f,$  where $f_{0},\dots,f_{n}$ are holomorphic functions and  without common zeros in $\overline \Omega(\alpha, \beta)$. 
The counting function
 $C_{\alpha \beta, f}(r, D)$   of $f$ with respect to $D$ is defined as
$$ C_{\alpha \beta, f}(r, D) =2\sum_{1\le \rho_n\le r, \alpha\le \psi_n\le \beta}(\dfrac{1}{{\rho_n}^{k}}
-\dfrac{{\rho_n}^{k}}{r^{2k}})sin\hspace{0.2cm}k(\psi_n-\alpha),$$
where the $\rho_ne^{i\psi_n}$ are the zeros of $Q(\large \text{f})$ in $\overline \Omega(\alpha, \beta)$ counting with  multiplicity. For each
 zero  $\rho_ne^{i\psi_n}$ of $Q(\large \text{f})$ in $\overline \Omega(\alpha, \beta)$ with multiple $m$, then
 term $2(\dfrac{1}{{\rho_n}^{k}}-\dfrac{{\rho_n}^{k}}{r^{2k}})sin\hspace{0.2cm}k(\psi_n-\alpha)$ is counted 
$m$ times in $C_{\alpha \beta, f}(r, D).$

Now let $\delta$ be a positive integer, the {\it truncated  counting function} of $f$ is defined by
$$C^\delta_{\alpha\beta, f}(r,D)= 2\sum_{1\le \rho_n\le r, \alpha\le \psi_n\le \beta,\; \min\{\text{ord}_{Q(\large \text{f})}(\rho_ne^{i\psi_n}), \delta\}}(\dfrac{1}
{{\rho_n}^{k}}-\dfrac{{\rho_n}^{k}}{r^{2k}})sin\hspace{0.2cm}k(\psi_n-\alpha),$$
where any zero of multiplicity greater than $\delta$  of $Q(\large \text{f})$ in $\overline \Omega(\alpha, \beta)$ is 
``truncated" and counted 
as if it only had multiplicity $\delta.$ This means that for each  zero  $\rho_ne^{i\psi_n}$ of $Q(f)$ in
 $\overline \Omega(\alpha, \beta)$ with multiple $m$, the terms $2(\dfrac{1}{{\rho_n}^{k}}
-\dfrac{{\rho_n}^{k}}{r^{2k}})sin\hspace{0.2cm}k(\psi_n-\alpha)$ is counted $\min\{m, \delta\}$
 times in $C_{\alpha \beta, f}(r, D).$

The {\it angular proximity Nevanlinna} of $f$ with respect to $D$ is defined as following:
$$A_{\alpha \beta, f}(r,D)=\dfrac{k}{\pi}\int_{1}^{r}(\dfrac{1}{t^k}-\dfrac{t^k}{r^{2k}}) \log \dfrac {\|\large \text{f}(te^{i\alpha})\|^{d}
\|\large \text{f}(te^{i\beta})\|^d}{|Q(\large \text{f})(te^{i\alpha})Q(\large \text{f})(re^{i\beta})|} \dfrac{dt}{t}$$
and 
$$B_{\alpha \beta, f}(r,D)=\dfrac{2k}{\pi r^{k}}\int_{\alpha}^{\beta} \log \dfrac {\|\large \text{f}(re^{i\varphi})\|^{d}}
{|Q(\large \text{f})(re^{i\varphi})|}
sin (k(\varphi-\alpha)) d\varphi,$$
where $\|f(z)\|=\max\{ |f_0(z)|,\dots ,|f_n(z)| \}.$

Let $V\subset \PP^N(\C)$ be a smooth complex projective variety of dimension $n\ge 1.$ Let $D_1, \dots, D_q$ be hypersurfaces
in $\PP^N(\C),$ where $q>n.$ The hypersurfaces $D_1, \dots, D_q$ are said to be {\it in general position on V}\; if for every subset
$\{i_0, \dots, i_n\} \subset \{1, \dots, q\},$ we have
$$ V\cap \text{Supp}D_{i_{0}} \cap \dots \cap \text{Supp}D_{i_n}= \emptyset,$$
where $\text{Supp}(D)$ means the support of the divisor $D.$ A map $f: \Omega(\alpha, \beta) \to V$ is said to be {\it algebraically nondegenerate}
 if the image of $f$ is not contained in any proper subvarieties of $V.$

In this paper,  a notation $``\|"$ in the inequality is mean that the inequality holds for $r\in(1, \infty)$ outside a set with 
measure finite.

Our main results are
\begin{theorem}\label{th1}
Let $D$ be a hypersurface in $\PP^n(\C)$ and $f: \overline \Omega(\alpha, \beta) \ax \PP^{n}(\C)$ be a holomorphic curve whose image is not contained $D$. Then we have for
 any $1< r <\infty$,
$$dS_{\alpha\beta, f}(r)=A_{\alpha \beta, f}(r,D)+B_{\alpha\beta, f}(r,D)+C_{\alpha\beta, f}(r, D)+O(1).$$
\end{theorem}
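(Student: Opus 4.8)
The plan is to read the statement as the \emph{first main theorem} for $f$ relative to the hypersurface $D$ in the Tsuji/angular setting, and to derive it from a single application of the Poisson--Jensen--type formula for the angle $\Omega(\alpha,\beta)$ that underlies the definition of the Tsuji characteristic (cf. \cite{JH1}). First I would pass to the auxiliary function
\[
u(z)=\log\frac{\|\mathbf{f}(z)\|^{d}}{|Q(\mathbf{f})(z)|},
\]
which is well defined on $\overline{\Omega(\alpha,\beta)}$ because the image of $f$ is not contained in $D$, i.e. $Q(\mathbf{f})\not\equiv0$. Writing $d\log\|\mathbf{f}\|$ and $\log|Q(\mathbf{f})|$ as subharmonic functions with Riesz masses $d\,\mu_{f}$ and $\nu$ respectively, where $\mu_{f}=\tfrac{1}{2\pi}\Delta\log\|\mathbf{f}\|\ (=\mathbf{f}^{*}\omega_{FS}\ge0)$ is the area measure defining $S_{\alpha\beta,f}(r)$ and $\nu=\sum_{a}\operatorname{ord}_{a}(Q(\mathbf{f}))\,\delta_{a}$ is the zero divisor of $Q(\mathbf{f})$, one gets $\tfrac{1}{2\pi}\Delta u=d\,\mu_{f}-\nu$. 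Finally note that $u$ is bounded below: by homogeneity of $Q$ there is a constant $C$ with $|Q(\mathbf{f})(z)|\le C\|\mathbf{f}(z)\|^{d}$, hence $u\ge-\log C$; this lower bound is what will make the boundary integrals below legitimate near points where $Q(\mathbf{f})$ vanishes.

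The core step is Green's identity on the truncated sector $\{1<|z|<r\}\cap\Omega(\alpha,\beta)$, for the pair $u$ and the weight
\[
K_{r}(z)=\Bigl(\frac{1}{|z|^{k}}-\frac{|z|^{k}}{r^{2k}}\Bigr)\sin k(\arg z-\alpha).
\]
Under the conformal map $z\mapsto(e^{-i\alpha}z)^{k}$ onto the half--annulus $\{1<|w|<r^{k},\ \operatorname{Im}w>0\}$, $K_{r}$ becomes a \emph{positive harmonic} function that vanishes on the outer arc $|z|=r$ and on both bounding rays $\arg z=\alpha,\beta$, and is nonzero only on the inner arc $|z|=1$; in particular $\Delta K_{r}=0$ in the sector, which can also be checked directly. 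Applying Green's identity (after the standard smoothing that legitimizes it for the merely subharmonic $\log\|\mathbf{f}\|$, or simply by treating $\log\|\mathbf{f}\|$ and $\log|Q(\mathbf{f})|$ separately and adding) to $u$ and $K_{r}$, the interior term $\int K_{r}\,d(\tfrac{1}{2\pi}\Delta u)$ equals, by the conformal invariance of the Laplacian \emph{measure}, $\tfrac12\bigl(d\,S_{\alpha\beta,f}(r)-C_{\alpha\beta,f}(r,D)\bigr)$, once one records that $C_{\alpha\beta,f}(r,D)=2\int K_{r}\,d\nu$ (which is exactly the definition of the counting function, multiplicities included) and $S_{\alpha\beta,f}(r)=2\int K_{r}\,d\mu_{f}$. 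On the boundary, a direct evaluation of $-\oint u\,\partial_{n}K_{r}$ shows that the outer arc $|z|=r$ contributes a multiple of $B_{\alpha\beta,f}(r,D)$, the two rays $\arg z=\alpha,\beta$ contribute (the same multiple of) $A_{\alpha\beta,f}(r,D)$, and the inner arc $|z|=1$ contributes $\oint_{|z|=1}(K_{r}\partial_{n}u-u\,\partial_{n}K_{r})$, which is $O(1)$ in $r$ because on this fixed compact arc $u,\partial_{n}u$ are controlled (using $u\ge-\log C$ for possible zeros of $Q(\mathbf{f})$) while $K_{r},\partial_{n}K_{r}$ stay bounded there uniformly in $r$. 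Dividing by the common normalizing constant and collecting the four contributions yields $d\,S_{\alpha\beta,f}(r)-C_{\alpha\beta,f}(r,D)=A_{\alpha\beta,f}(r,D)+B_{\alpha\beta,f}(r,D)+O(1)$, which rearranges to the assertion.

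The one genuinely delicate point — the step I expect to be the main obstacle — is the treatment of the inner boundary $|z|=1$ together with possible zeros of $Q(\mathbf{f})$ lying on $\{|z|=1\}\cup\{\arg z=\alpha\}\cup\{\arg z=\beta\}$, where $\log|Q(\mathbf{f})|$ tends to $-\infty$ and its normal derivative is singular. The standard remedy is to run Green's identity on $\{1+\varepsilon<|z|<r\}\cap\Omega(\alpha,\beta)$ (or to indent the contour around the offending points), use the lower bound $u\ge-\log C$ and the integrability of $\log|Q(\mathbf{f})|$ on the relevant curves to pass to the limit $\varepsilon\to0$, and for the exceptional radii $r$ (those with $Q(\mathbf{f})$ vanishing on $|z|=r$) appeal to continuity/monotonicity in $r$; all of this is routine and only affects the $O(1)$. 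A secondary, purely bookkeeping matter is that $S_{\alpha\beta,f}(r)$ might be normalized through $\tfrac12\log\sum_{j}|f_{j}|^{2}$ rather than $\log\max_{j}|f_{j}|$; since these differ by a bounded function, applying Green's identity to that bounded difference shows the resulting discrepancy in $S_{\alpha\beta,f}(r)$ is itself $O(1)$ and so is harmlessly absorbed. Everything else amounts to the elementary computation of the three boundary integrals over the outer arc and the two rays.
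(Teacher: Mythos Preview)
Your argument is correct, but it takes a genuinely different route from the paper. You work from scratch via Green's identity for the pair $(u,K_{r})$ on the truncated sector, identifying the interior mass with $d\,S_{\alpha\beta,f}(r)-C_{\alpha\beta,f}(r,D)$ and the boundary normal--derivative integrals of $K_{r}$ on the outer arc and the two rays with $B_{\alpha\beta,f}(r,D)$ and $A_{\alpha\beta,f}(r,D)$; the inner arc gives the $O(1)$. In effect you are re--proving the Carleman formula in this special instance. The paper, by contrast, simply \emph{quotes} the Carleman formula (Lemma~\ref{lm1}) for the holomorphic function $Q(\mathbf f)$: since $Q(\mathbf f)$ has no poles, $C_{\alpha\beta}(r,Q(\mathbf f))=0$, and Lemma~\ref{lm1} immediately expresses $C_{\alpha\beta,f}(r,D)=C_{\alpha\beta}(r,1/Q(\mathbf f))$ as the sum of the ray and arc integrals of $\log|Q(\mathbf f)|$, up to $O(1)$. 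Adding this to the definitions of $A_{\alpha\beta,f}(r,D)$ and $B_{\alpha\beta,f}(r,D)$, the $\log|Q(\mathbf f)|$ terms cancel and what remains is exactly the boundary--integral expression for $d\,S_{\alpha\beta,f}(r)$. The paper's proof is therefore three lines; yours is self--contained potential theory in the spirit of \cite{JH1} and incidentally justifies the Ahlfors--Shimizu form $S_{\alpha\beta,f}(r)=2\int K_{r}\,d\mu_{f}$, whereas the paper treats $S_{\alpha\beta,f}(r)$ purely as the boundary integral of $\log\|\mathbf f\|$. One small caveat: because the paper's $S_{\alpha\beta,f}(r)$ is the boundary expression, your identification of the interior term with $d\,S_{\alpha\beta,f}(r)$ tacitly uses one more application of Green/Carleman to $\log\|\mathbf f\|$ itself; you essentially cover this under your remark about normalizations differing by $O(1)$, but it is worth making explicit.
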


\begin{theorem}\label{th3} Let $D$ be a hypersurface in $\PP^n(\C)$ and $f:\Omega(\alpha, \beta) 
\ax \PP^{n}(\C)$ be a holomorphic curve whose image is not contained $D$. Then we have for any $1< r <\infty$,
$$d\mathfrak T_{\alpha \beta, f}(r)=\mathfrak m_{\alpha \beta, f}(r,Q)+\mathfrak N_{\alpha \beta, f}(r,Q)+O(1).$$
\end{theorem}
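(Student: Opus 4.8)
The plan is to obtain Theorem~\ref{th3} in the same way one obtains the classical Nevanlinna--Cartan first main theorem from Jensen's formula in a disc: namely, from a single application of the angular Tsuji--Jensen formula to the holomorphic function $Q(\blf)$. Throughout, write $z(\varphi)=r\sin^{1/k}\varphi\,e^{i(\alpha+k^{-1}\varphi)}$ for the Tsuji contour and $d\mu_{r}=\dfrac{1}{2\pi}\dfrac{d\varphi}{r^{k}\sin^{2}\varphi}$ on $[\arcsin r^{-k},\pi-\arcsin r^{-k}]$, so that $\mathfrak T_{\alpha\beta,f}(r)=\int\log\|\blf(z(\varphi))\|\,d\mu_{r}$ and $\mathfrak m_{\alpha\beta,f}(r,D)=\int\log\bigl(\|\blf\|^{d}/|Q(\blf)|\bigr)(z(\varphi))\,d\mu_{r}$ by definition.

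The key preliminary step is the angular Tsuji--Jensen formula: composing with the conformal map $w=e^{-ik\alpha}z^{k}$, which sends $\Omega(\alpha,\beta)$ onto the upper half plane and carries $\Xi(\alpha,\beta;r)$ onto the disc $\{|w-ir^{k}/2|\le r^{k}/2\}$ internally tangent to $\R$ at the origin with the disc $\{|w|\le 1\}$ removed, and then applying the classical Poisson--Jensen formula on that tangent disc, one gets for every holomorphic $g$ on $\Omega(\alpha,\beta)$ with $g\not\equiv 0$
\begin{multline*}
\int_{\arcsin r^{-k}}^{\pi-\arcsin r^{-k}}\log\bigl|g\bigl(r\sin^{1/k}\varphi\,e^{i(\alpha+k^{-1}\varphi)}\bigr)\bigr|\,d\mu_{r}\\
=\sum_{a_{n}}\Bigl(\frac{\sin k(\alpha_{n}-\alpha)}{|a_{n}|^{k}}-\frac{1}{r^{k}}\Bigr)+O(1),
\end{multline*}
the sum running over the zeros $a_{n}=|a_{n}|e^{i\alpha_{n}}$ of $g$ in $\Xi(\alpha,\beta;r)$ counted with multiplicity, the $O(1)$ being the fixed contribution of the base point $|w|=1$ and the (constant) harmonic correction term. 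Here one checks separately that the total mass $\int d\mu_{r}$ is bounded uniformly in $r\in(1,\infty)$; this is what makes the error genuinely $O(1)$, and it also shows that $\mathfrak T_{\alpha\beta,f}$ and $\mathfrak m_{\alpha\beta,f}$ depend on the chosen reduced representation only up to $O(1)$, since a change of representation replaces $\|\blf\|,|Q(\blf)|$ by $|h|\,\|\blf\|,|h|^{d}|Q(\blf)|$ for a zero-free holomorphic $h$, and $\int\log|h|\,d\mu_{r}=O(1)$ by the displayed formula applied to $g=h$.

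With this in hand the proof is bookkeeping. Along the contour, split
\[
\log\|\blf(z)\|^{d}=\log\frac{\|\blf(z)\|^{d}}{|Q(\blf)(z)|}+\log|Q(\blf)(z)|,
\]
and integrate against $d\mu_{r}$. The left-hand side integrates to $d\,\mathfrak T_{\alpha\beta,f}(r)$; the first term on the right integrates, by definition, to exactly $\mathfrak m_{\alpha\beta,f}(r,Q)$ (no $\log^{+}$ discrepancy arises, since the proximity function is defined with $\log$; and in any case $|Q(\blf)|\le C\|\blf\|^{d}$ keeps the integrand bounded below). The second term is handled by the displayed formula applied to $g=Q(\blf)$, which is holomorphic on $\Omega(\alpha,\beta)$ and $\not\equiv 0$ by hypothesis: its zeros in $\Xi(\alpha,\beta;r)$ are precisely the points counted (with multiplicity) in $\mathfrak N_{\alpha\beta,f}(r,Q)$, so $\int\log|Q(\blf)|\,d\mu_{r}=\mathfrak N_{\alpha\beta,f}(r,Q)+O(1)$. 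Adding the two contributions gives the asserted identity. It holds for every $1<r<\infty$ and not merely off an exceptional set: as $r\to\infty$ the error is the fixed base-point constant, and as $r\downarrow 1$ the interval $[\arcsin r^{-k},\pi-\arcsin r^{-k}]$ and the region $\Xi(\alpha,\beta;r)$ both degenerate, so $\mathfrak T_{\alpha\beta,f}$, $\mathfrak m_{\alpha\beta,f}$, $\mathfrak N_{\alpha\beta,f}$ all tend to $0$ and $O(1)$ is trivial there.

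The main obstacle is entirely the preliminary step: establishing the angular Tsuji--Jensen formula with the correct contour weight $1/(r^{k}\sin^{2}\varphi)$ and the correct counting weights $\sin k(\alpha_{n}-\alpha)/|a_{n}|^{k}-r^{-k}$, and verifying that the residual boundary/base-point term is $O(1)$ \emph{uniformly} in $r$ --- in particular uniformly as $r\downarrow 1$, where the tangent disc in the $w$-plane shrinks and some care is needed near the arc $|z|=1$. This is essentially the holomorphic-curve analogue of Tsuji's first main theorem for meromorphic functions, in the spirit of Zheng~\cite{JH1}; once it is available, everything else follows as above.
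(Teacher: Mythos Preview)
Your proof is correct and follows essentially the same route as the paper: both reduce the identity to a single application of the Tsuji--Jensen formula (the paper's Lemma~\ref{lm3}, quoted from \cite{JH}) to the holomorphic function $Q(\blf)$, giving $\int\log|Q(\blf)|\,d\mu_{r}=\mathfrak N_{\alpha\beta,f}(r,D)+O(1)$, after which the result is immediate from the definitions. The only difference is packaging: the paper invokes Lemma~\ref{lm3} as a black box, whereas you sketch its derivation via the conformal map $w=e^{-ik\alpha}z^{k}$ and Poisson--Jensen on the tangent disc, and you add the (correct and useful) remarks on independence of the reduced representation and uniformity of the $O(1)$ in $r$.
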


Taking $d=1,$ we get the following results:
\begin{cor}\label{cor1}
Let $H$ be a hyperplane in $\PP^n(\C)$ and $f: \overline \Omega(\alpha, \beta) \ax \PP^{n}(\C)$ be a holomorphic curve whose image is not contained $H$. Then we have for
 any $1< r <\infty$,
$$S_{\alpha\beta, f}(r)=A_{\alpha \beta, f}(r,H)+B_{\alpha\beta, f}(r,H)+C_{\alpha\beta, f}(r, H)+O(1).$$
\end{cor}

\begin{cor}\label{cor3} Let $H$ be a hyperplane in $\PP^n(\C)$ and $f:\Omega(\alpha, \beta) 
\ax \PP^{n}(\C)$ be a holomorphic curve whose image is not contained $H$. Then we have for any $1< r <\infty$,
$$\mathfrak T_{\alpha \beta, f}(r)=\mathfrak m_{\alpha \beta, f}(r,H)+\mathfrak N_{\alpha \beta, f}(r,H)+O(1).$$
\end{cor}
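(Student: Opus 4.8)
The plan is to deduce Corollary~\ref{cor3} directly from Theorem~\ref{th3} by specializing to the case $d=1$. Recall that a hyperplane $H$ in $\PP^n(\C)$ is precisely a hypersurface of degree $d=1$, defined by a homogeneous linear polynomial $Q(w_0,\dots,w_n)=a_0w_0+\cdots+a_nw_n$ with $(a_0,\dots,a_n)\neq 0$. Under the hypothesis that the image of $f$ is not contained in $H$, we have $Q(\mathrm{f})\not\equiv 0$, so all the Tsuji-type functions $\mathfrak m_{\alpha\beta,f}(r,Q)$, $\mathfrak N_{\alpha\beta,f}(r,Q)$, and the characteristic $\mathfrak T_{\alpha\beta,f}(r)$ are well defined and Theorem~\ref{th3} applies verbatim.

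First I would observe that when $d=1$ the factor $d$ appearing in front of $\mathfrak T_{\alpha\beta,f}(r)$ in the identity of Theorem~\ref{th3} becomes $1$, so the left-hand side reduces from $d\,\mathfrak T_{\alpha\beta,f}(r)$ to $\mathfrak T_{\alpha\beta,f}(r)$. On the right-hand side, the proximity function $\mathfrak m_{\alpha\beta,f}(r,Q)$ with $d=1$ has the exponent $d$ in $\|\mathrm{f}(\cdot)\|^d$ equal to $1$, which is exactly the quantity denoted $\mathfrak m_{\alpha\beta,f}(r,H)$ in the statement of the corollary; similarly the truncation-free counting function $\mathfrak N_{\alpha\beta,f}(r,Q)$ specializes to $\mathfrak N_{\alpha\beta,f}(r,H)$, since a linear form has the same zero divisor whether one views it as cutting out $H$ or the degree-one hypersurface $D$. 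Substituting $d=1$ into the displayed equation of Theorem~\ref{th3} therefore yields
$$\mathfrak T_{\alpha\beta,f}(r)=\mathfrak m_{\alpha\beta,f}(r,H)+\mathfrak N_{\alpha\beta,f}(r,H)+O(1)$$
for all $1<r<\infty$, which is the assertion of Corollary~\ref{cor3}.

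Since this is a pure specialization, there is essentially no obstacle: the only point requiring a line of care is the bookkeeping that the $O(1)$ term in Theorem~\ref{th3} is uniform and does not depend on $d$ in a way that would degenerate when $d=1$ — but inspection of the proof of Theorem~\ref{th3} shows the error term comes from comparing the Fubini--Study metric with the pullback metric and is independent of the particular hypersurface chosen, hence harmless here. I would close simply by remarking that Corollary~\ref{cor1} follows from Theorem~\ref{th1} in exactly the same manner, replacing $\mathfrak T$, $\mathfrak m$, $\mathfrak N$ by $S$, $A+B$, $C$ respectively and setting $d=1$ throughout.
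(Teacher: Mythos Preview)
Your proof is correct and follows exactly the paper's own approach: the paper simply states ``Taking $d=1$, we get the following results'' before listing Corollaries~\ref{cor1} and~\ref{cor3}, so the specialization you carry out is precisely what is intended. Your added remark about the $O(1)$ term is harmless extra care but not needed.
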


\begin{theorem}\label{th4}  Let $ f: \C \ax \PP^{n}(\C)$  be a linearly non-degenerate holomorphic
 curve  and $H_{1},\dots , H_{q}$ be hyperplanes in $\PP^{n}(\C)$ in general position. Then we have
$$\| \quad (q-n-1)S_{\alpha\beta, f}(r)\leqslant\sum_{j=1}^{q} C^{n}_{\alpha\beta, f}(r,H_j)+O(\log T_f(r)+\log r).$$
\end{theorem}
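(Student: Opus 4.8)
The plan is to follow the classical Cartan proof of the Second Main Theorem for hyperplanes, but with all Nevanlinna functions replaced by their Tsuji-type counterparts on the angular domain, using Corollary~\ref{cor1} (the First Main Theorem in the $S_{\alpha\beta,f}$-formulation with $d=1$) as the substitute for the usual First Main Theorem.

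\medskip

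\emph{Step 1 (Reduction via the Wronskian).} Let $\text{f}=(f_0:\dots:f_n)$ be a reduced representation. Since $f$ is linearly nondegenerate, the Wronskian $W=W(f_0,\dots,f_n)$ is not identically zero. For each hyperplane $H_j$ write $L_j=\sum a_{ji}f_i$ for the linear form defining $H_j$; general position guarantees that any $n+1$ of the $L_j$ are linearly independent. The key algebraic identity (Cartan's lemma / the ``product-to-sum'' estimate) gives, for each $z$ and a suitable choice of an index set $R(z)\subset\{1,\dots,q\}$ of cardinality $q-n-1$ realizing the $q-n-1$ smallest values of $|L_j(\text{f})(z)|/\|\text{f}(z)\|$,
$$
\sum_{j\in R(z)}\log\frac{\|\text{f}(z)\|}{|L_j(\text{f})(z)|}
\le \log\frac{\|\text{f}(z)\|^{n+1}\,|W(z)|}{|W(z)|\prod_{j\notin R(z)}|L_j(\text{f})(z)|}+O(1),
$$
and after rearranging, the standard Cartan inequality bounds $(q-n-1)\log\|\text{f}(z)\|$ pointwise by $\sum_j\log(\|\text{f}(z)\|/|L_j(\text{f})(z)|)$ plus $\log|W|$ minus $\log$ of a product of the $|L_j(\text{f})|$, up to $O(1)$.

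\medskip

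\emph{Step 2 (Integrate against the Tsuji kernel).} I would then integrate this pointwise inequality against the angular Tsuji measure that defines $B_{\alpha\beta,f}$ and $A_{\alpha\beta,f}$, i.e. apply the operator $\frac{2k}{\pi r^k}\int_\alpha^\beta(\cdot)\sin(k(\varphi-\alpha))\,d\varphi$ together with the radial boundary integral. This converts $\sum_j\log(\|\text{f}\|/|L_j(\text{f})|)$ into $\sum_j\big(A_{\alpha\beta,f}(r,H_j)+B_{\alpha\beta,f}(r,H_j)\big)$, which by Corollary~\ref{cor1} equals $\sum_j\big(S_{\alpha\beta,f}(r)-C_{\alpha\beta,f}(r,H_j)\big)+O(1)$. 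The term $-\sum_j\log|L_j(\text{f})|+\log|W|$ integrates to a quantity governed by the zeros of $W/\prod L_j(\text{f})$, whose Tsuji counting function is $\le\sum_j C^n_{\alpha\beta,f}(r,H_j)$ after one checks that a zero of $L_j(\text{f})$ of multiplicity $m$ contributes $m-\min\{m,n\}$ to the order of $W$ there (the classical ``Wronskian estimate'' on multiplicities). Finally the logarithmic-derivative / lemma-on-the-logarithmic-derivative term coming from $\log|W|$ minus the Tsuji counting contributes the error term $O(\log T_f(r)+\log r)$ with the exceptional set ``$\|$''.

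\medskip

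\emph{Main obstacle.} The crux is establishing the logarithmic-derivative lemma in the Tsuji/angular setting: one needs an estimate of the form $\mathfrak m$-type proximity of $W/(f_{i_0}\cdots f_{i_n})$ (equivalently of the logarithmic derivatives $f_i^{(k)}/f_i$) integrated against the Tsuji kernel being $O(\log T_f(r)+\log r)$ outside a finite-measure exceptional set. In the plane this is Cartan's lemma on the logarithmic derivative; here one must either invoke the angular/Tsuji version established by Zheng \cite{JH1} (for the scalar case and then applied coordinatewise) or prove it by transplanting the Borel-type growth lemma to the measure $(t^{-k}-t^k r^{-2k})\,dt/t$ on $(1,r)$. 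Once this lemma is in hand, the combinatorial Cartan argument and the bookkeeping of the $O(1)$, $\log r$, and $\log T_f(r)$ contributions are routine; the subtlety is purely in transferring the differentiated-function estimates from the disk/plane to the Tsuji characteristic, since $S_{\alpha\beta,f}$ and $T_f$ live on different scales and one must make sure the exceptional set and the $\log r$ term are handled consistently.
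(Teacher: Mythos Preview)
Your outline is correct and matches the paper's proof in all essentials: the paper packages your Step~1 as Lemma~\ref{lm12} (the general-position estimate reducing the full sum $\sum_j (A_{\alpha\beta,f}+B_{\alpha\beta,f})(r,H_j)$ to integrals of $\max_K\sum_{j\in K}\log(\|\text{f}\|/|(a_j,f)|)$) and your Step~2 as Lemma~\ref{lm11} (the Wronskian estimate bounding those integrals by $(n+1)S_{\alpha\beta,f}(r)-C_{\alpha\beta,W}(r,0)+O(\log T_f(r)+\log r)$), then combines with Corollary~\ref{cor1} and the standard local multiplicity count for $W$ exactly as you describe.

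One point where you overcomplicate matters: the ``main obstacle'' you flag is not actually an obstacle in this theorem. Because $f$ is defined on all of $\C$ (not merely on the angle), the angular logarithmic-derivative estimate needed here is the classical result of Goldberg--Ostrovskii, recorded as Lemma~\ref{lm4}: for $g$ meromorphic on $\C$ one has $S_{\alpha\beta}(r,g^{(k)}/g)\le O(\log T(r,g)+\log r)$ outside a set of finite measure, with the ordinary Nevanlinna characteristic $T$ on the right. This is applied directly to the ratios $g_{\mu(j)}^{(k)}/g_{\mu(j)}$ arising from the logarithmic-derivative expansion of the Wronskian, using $T(r,g_{\mu(j)})\le T_f(r)+O(1)$. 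No transplantation or appeal to \cite{JH1} is required; the delicate case is Theorem~\ref{th6}, where $f$ lives only on $\Omega(\alpha,\beta)$ and one uses Lemma~\ref{lm6} from \cite{JH} instead.
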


\begin{theorem}\label{th6} Let $ f: \Omega(\alpha, \beta) \ax \PP^{n}(\C)$  be a linearly 
non-degenerate holomorphic curve  and $H_{1},\dots , H_{q}$ be hyperplanes in $\PP^{n}(\C)$ in general position. Then we have
$$\| \quad (q-n-1)\mathfrak T_{ \alpha \beta, f}(r)\leqslant\sum_{j=1}^{q} \mathfrak {N^{n}}_{\alpha \beta, f}(r,H_j)
+O(\log \mathfrak T_{\alpha \beta, f}(r)+\log r).$$
\end{theorem}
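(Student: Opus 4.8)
The plan is to adapt the classical proof of the Second Main Theorem for hyperplanes in general position (Cartan's method via the Wronskian and logarithmic derivative lemma) to the Tsuji-type characteristic functions $\mathfrak T_{\alpha\beta,f}$, $\mathfrak N_{\alpha\beta,f}$, $\mathfrak m_{\alpha\beta,f}$ introduced above. First I would reduce to the standard situation by composing with a suitable linear automorphism $T\in GL_{n+1}(\C)$ so that the hyperplanes $H_1,\dots,H_q$ are defined by linear forms $L_1,\dots,L_q$ with $L_j(\blf)=L_j(f_0,\dots,f_n)$; since such a change of coordinates alters each of the quantities in the inequality only by $O(1)$, this is harmless. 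Fix a reduced representation $\blf=(f_0:\dots:f_n)$ on $\Omega(\alpha,\beta)$ and let $W=W(f_0,\dots,f_n)$ be the Wronskian, which is not identically zero by linear nondegeneracy. Then for each admissible subset $J=\{j_0,\dots,j_n\}$ one has the identity $W(f_0,\dots,f_n)=c_J\, W(L_{j_0}(\blf),\dots,L_{j_n}(\blf))$ for a nonzero constant $c_J$, and the key pointwise estimate

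$$
\log\frac{\|\blf(z)\|^{q}}{\prod_{j=1}^{q}|L_j(\blf)(z)|}\le \log\frac{|W(f_0,\dots,f_n)(z)|}{\prod_{j\in J}|L_j(\blf)(z)|}+O\Big(\max_{k}\log\frac{\|\blf(z)\|\,|W(z)|}{\text{(terms)}}\Big)+O(1),
$$

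obtained from the general-position hypothesis (away from the finitely many $z$ where some $L_{j_0}(\blf),\dots,L_{j_n}(\blf)$ degenerate, $\|\blf(z)\|$ is comparable to $\min_{j\in J}|L_j(\blf)(z)|$ for a suitable $J=J(z)$). Integrating this against the Tsuji kernel that defines $\mathfrak m_{\alpha\beta,f}$ turns it into the inequality
$$
q\,\mathfrak m_{\alpha\beta,f}(r,H_0)\le \mathfrak m_{\alpha\beta,f}\!\left(r,\frac{W}{\prod_{j\in J}L_j(\blf)}\right)+\sum_{j=1}^{q}\mathfrak m_{\alpha\beta,f}(r,H_j)+O(1),
$$
up to the usual bookkeeping of $J$ varying over finitely many subsets, where $H_0$ is a fixed generic hyperplane.

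The next step is the logarithmic derivative lemma in the Tsuji setting: one needs that the proximity function of $W(\blf)/\prod_{j\in J}L_j(\blf)$ — which is essentially a sum of Tsuji proximities of logarithmic derivatives $\mathfrak m_{\alpha\beta}\big(r, (L_{j_\ell}(\blf))'/L_{j_\ell}(\blf)\big)$ and higher analogues — is bounded by $O(\log \mathfrak T_{\alpha\beta,f}(r)+\log r)$ for $r$ outside a set of finite measure. This is the analogue in an angle of the Nevanlinna logarithmic derivative estimate; I would invoke or re-derive the Tsuji/Zheng version (as in \cite{JH}, \cite{JH1}) applied to the coordinate meromorphic functions $f_i/f_j$ and their derivatives, noting that $\mathfrak T_{\alpha\beta}(r,f_i/f_j)=O(\mathfrak T_{\alpha\beta,f}(r))+O(1)$. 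Combining with Corollary \ref{cor3} applied to $H_0$, namely $\mathfrak T_{\alpha\beta,f}(r)=\mathfrak m_{\alpha\beta,f}(r,H_0)+\mathfrak N_{\alpha\beta,f}(r,H_0)+O(1)$, and with the elementary bound $\mathfrak N_{\alpha\beta,f}(r,H_0)\le \mathfrak T_{\alpha\beta,f}(r)+O(1)$, one rearranges to get
$$
(q-n-1)\,\mathfrak T_{\alpha\beta,f}(r)\le \sum_{j=1}^{q}\Big(\mathfrak T_{\alpha\beta,f}(r)-\mathfrak m_{\alpha\beta,f}(r,H_j)\Big)+\big(\text{Wronskian counting term}\big)-\mathfrak N_{\alpha\beta}(r,W)+O(\log\mathfrak T_{\alpha\beta,f}(r)+\log r).
$$
Here the first main theorem gives $\mathfrak T_{\alpha\beta,f}(r)-\mathfrak m_{\alpha\beta,f}(r,H_j)=\mathfrak N_{\alpha\beta,f}(r,H_j)+O(1)$, so the right side becomes $\sum_j \mathfrak N_{\alpha\beta,f}(r,H_j)$ minus the zeros of the Wronskian on $\Xi(\alpha,\beta;r)$.

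The final and most delicate step is the \emph{truncation}: one must show that $\sum_{j=1}^q \mathfrak N_{\alpha\beta,f}(r,H_j)-\mathfrak N_{\alpha\beta}(r,W(\blf))\le \sum_{j=1}^q \mathfrak {N^{n}}_{\alpha\beta,f}(r,H_j)+O(1)$. This is the purely local statement, valid zero-by-zero: at a point $a\in\Xi(\alpha,\beta;r)$, if exactly one form $L_j(\blf)$ vanishes there (general position guarantees at most one vanishes) with multiplicity $\mu$, then the Wronskian $W(\blf)$ vanishes there with multiplicity at least $\mu-n$, so $\mu-\operatorname{ord}_a W\le n=\min\{\mu,n\}$ whenever $\mu>n$, and $\le\mu$ otherwise — in all cases $\mu-\operatorname{ord}_a W(\blf)\le\min\{\mu,n\}$; summing these inequalities against the Tsuji weight $(\sin k(\alpha_a-\alpha)/|a|^k-r^{-k})$ (which is nonnegative on $\Xi(\alpha,\beta;r)$) yields the truncated bound. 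I expect the main obstacle to be the careful formulation and proof of the Tsuji logarithmic derivative lemma with the correct error term $O(\log\mathfrak T_{\alpha\beta,f}(r)+\log r)$, since unlike the classical disc case the integration kernel $dt/t$ weighted by $(t^{-k}-t^k r^{-2k})$ is less forgiving and one must control boundary contributions; the rest is a faithful transcription of Cartan's argument once the first main theorem (Corollary \ref{cor3}) and this lemma are in place.
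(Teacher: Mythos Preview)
Your overall strategy is exactly the paper's: it packages the same Cartan--Wronskian argument into Lemmas~\ref{lm15} and~\ref{lm16} (the Tsuji analogues of Lemmas~\ref{lm11} and~\ref{lm12}), invokes the Tsuji logarithmic derivative estimate (Lemma~\ref{lm6}, quoted from \cite{JH}) rather than re-proving it, combines with the First Main Theorem (Corollary~\ref{cor3}) to reach
\[
\|\quad (q-n-1)\mathfrak T_{\alpha\beta,f}(r)\le \sum_{j=1}^q \mathfrak N_{\alpha\beta,f}(r,H_j)-\mathfrak N_{\alpha\beta,W}(r,0)+O(\log\mathfrak T_{\alpha\beta,f}(r)+\log r),
\]
and finishes with the local truncation inequality. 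So the route is the same; the obstacle you anticipate (the Tsuji log-derivative lemma) is simply imported as Lemma~\ref{lm6}.

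There is, however, a genuine error in your truncation step. You assert that ``general position guarantees at most one [form] vanishes'' at a given point. That is false: general position of hyperplanes in $\PP^n(\C)$ only forces any $n+1$ of the defining forms to be linearly independent, so at most $n$ (not one) of the $L_j(\blf)$ can vanish at a given $z_0$. Your local computation $\mu-\operatorname{ord}_{z_0}W\le\min\{\mu,n\}$ therefore does not cover the general case. The correct argument, as in the paper, is: at $z_0$ let $L_{j_1}(\blf),\dots,L_{j_m}(\blf)$ be the vanishing forms with multiplicities $k_1,\dots,k_m$ (so $m\le n$); choose an injective $\mu:\{0,\dots,n\}\to\{1,\dots,q\}$ whose image contains $\{j_1,\dots,j_m\}$, use $W(\blf)=c_\mu\,W\big(L_{\mu(0)}(\blf),\dots,L_{\mu(n)}(\blf)\big)$, and factor out $(z-z_0)^{k_i-n}$ from each column with $k_i\ge n$ to conclude $\operatorname{ord}_{z_0}W\ge\sum_{k_i\ge n}(k_i-n)$. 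This yields $\sum_i k_i-\operatorname{ord}_{z_0}W\le\sum_i\min\{k_i,n\}$, which is what is needed. (Your intermediate displayed inequality involving a ``generic $H_0$'' is also garbled; the clean bookkeeping is $\sum_j\mathfrak m_{\alpha\beta,f}(r,H_j)=q\,\mathfrak T_{\alpha\beta,f}(r)-\sum_j\mathfrak N_{\alpha\beta,f}(r,H_j)+O(1)$ directly, with no auxiliary hyperplane.)
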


\begin{theorem}\label{Th10} Let $f: \Omega(\alpha, \beta) \to \mathbb P^N(\mathbb C)$ be an algebraically nondegenerate
holomorphic curve. Let $d $ and $n$ be two integers with $n>N(d+N+1).$ Let 
$\mathcal H_i=\{z\in \mathbb P^{N}(\mathbb C), \mathcal H_i(z)=0\}, 0\le i\le N,$ be hyperplanes in $\mathbb P^N(\mathbb C).$ Let $D_i=\{z\in \mathbb P^{N}(\mathbb C), Q_i(z)=0\}, 
0\le i\le N,$ be hypersurfaces of degree $d$ such that the hypersurfaces
 $\{\mathcal H_0^nQ_0=0\}, \dots, \{\mathcal H_N^nQ_N=0\}$ are in general position in 
$\mathbb P^N(\mathbb C).$ Let 
 $D=\{z\in \mathbb P^N(\mathbb C), \sum_{i=0}^{N}\mathcal H_i^nQ_i=0\}.$ Then
\begin{align*}
\|(n-(d+N+1)N)\mathfrak T_{\alpha\beta, f}(r)&+\sum_{i=0}^{N}(\mathfrak N_{\alpha\beta, f}(r, D_i)
-\mathfrak N_{\alpha\beta, f}^{N}(r, D_i))\\
&\le \mathfrak N_{\alpha\beta, f}^{N}(r, D)+o(\mathfrak T_{\alpha\beta, f}(r)).
\end{align*}
\end{theorem}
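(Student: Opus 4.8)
The plan is to reduce the statement to an application of the second main theorem for hyperplanes in general position (Theorem \ref{th6}) via the standard Veronese-type embedding trick due to Ru, adapted to the Tsuji characteristic setting. First I would consider the holomorphic curve $f:\Omega(\alpha,\beta)\to\mathbb P^N(\mathbb C)$ together with the $N+1$ products $g_i=\mathcal H_i^n Q_i$, each a homogeneous polynomial of degree $d+n$ in the coordinates of $\mathbb P^N(\mathbb C)$. Let $n_0=\binom{N+d+n}{N}-1$ be the dimension of the space of homogeneous polynomials of degree $d+n$ in $N+1$ variables, and let $\Phi:\mathbb P^N(\mathbb C)\dashrightarrow\mathbb P^{n_0}(\mathbb C)$ be the Veronese map of degree $d+n$. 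Because $f$ is algebraically nondegenerate, the composition $F=\Phi\circ f$ is a linearly nondegenerate holomorphic curve into $\mathbb P^{n_0}(\mathbb C)$ (its image spans, since any linear relation among the Veronese coordinates would give a hypersurface containing the image of $f$). The hypersurfaces $g_0,\dots,g_N$ being in general position in $\mathbb P^N(\mathbb C)$ translates, after pushing forward by $\Phi$, into $N+1$ hyperplanes $\widetilde H_0,\dots,\widetilde H_N$ in $\mathbb P^{n_0}(\mathbb C)$ that are in general position (in fact linearly independent), and likewise $D$, defined by $\sum_{i=0}^N g_i$, pushes to a further hyperplane $\widetilde H$. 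The key linear-algebra point is that, by general position, $\widetilde H$ lies in the span of $\widetilde H_0,\dots,\widetilde H_N$ and in fact $\widetilde H_0,\dots,\widetilde H_N,\widetilde H$ can be arranged so that any $N+1$ of these $N+2$ hyperplanes are linearly independent; hence they form $q=N+2$ hyperplanes in general position in an $N$-dimensional projective subspace $\mathbb P^N(\mathbb C)\subset\mathbb P^{n_0}(\mathbb C)$ containing $F(\Omega(\alpha,\beta))$ after a suitable linear projection.

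Next I would apply Theorem \ref{th6} to the linearly nondegenerate curve $F$ (viewed in the $N$-dimensional span of its image) together with these $N+2$ hyperplanes in general position, obtaining
\[
\|\quad (q-N-1)\,\mathfrak T_{\alpha\beta,F}(r)\le\sum_{j}\mathfrak N^{N}_{\alpha\beta,F}(r,\widetilde H_j)+O(\log\mathfrak T_{\alpha\beta,F}(r)+\log r),
\]
which with $q=N+2$ gives $\|\ \mathfrak T_{\alpha\beta,F}(r)\le\sum_{i=0}^N\mathfrak N^N_{\alpha\beta,F}(r,\widetilde H_i)+\mathfrak N^N_{\alpha\beta,F}(r,\widetilde H)+O(\log\mathfrak T_{\alpha\beta,F}(r)+\log r)$. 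Now I translate each term back to $f$. Since $F=\Phi\circ f$ and $\Phi$ has degree $d+n$, the first main theorem (Corollary \ref{cor3}, applied coordinate-wise, or directly the definitions of $\mathfrak T$ and $\mathfrak N$) yields $\mathfrak T_{\alpha\beta,F}(r)=(d+n)\,\mathfrak T_{\alpha\beta,f}(r)+O(1)$; the error term $O(\log\mathfrak T_{\alpha\beta,F}(r))$ therefore becomes $o(\mathfrak T_{\alpha\beta,f}(r))$. For the counting functions: the pullback of $\widetilde H_i$ under $F$ is the divisor of $g_i=\mathcal H_i^n Q_i$, so $\mathfrak N_{\alpha\beta,F}(r,\widetilde H_i)=n\,\mathfrak N_{\alpha\beta,f}(r,\mathcal H_i)+\mathfrak N_{\alpha\beta,f}(r,D_i)$; and similarly $\mathfrak N_{\alpha\beta,F}(r,\widetilde H)=\mathfrak N_{\alpha\beta,f}(r,D)$ since $D$ is cut out by $\sum g_i$. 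The truncation level $N$ on the $F$-side must be compared with the geometry of these divisors: at a zero of $\mathcal H_i^n Q_i$ that comes from a zero of $\mathcal H_i$ of multiplicity $\mu$, the corresponding zero of the $F$-coordinate has multiplicity $n\mu$, but general position of the $\mathcal H_j^n Q_j$ forces these $\mathcal H_i$-zeros not to be shared, so the truncated count $\mathfrak N^N_{\alpha\beta,F}$ over those points contributes at most $N\cdot(\text{number of such points})$, which is what produces the coefficient $(d+N+1)N$ after bookkeeping over all $i$ and over the $Q_i$-contributions.

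Assembling these substitutions, the inequality $(d+n)\mathfrak T_{\alpha\beta,f}(r)\le\sum_{i=0}^N\big(n\,\mathfrak N^{?}_{\alpha\beta,f}(r,\mathcal H_i)+\mathfrak N^{N}_{\alpha\beta,f}(r,D_i)\big)+\mathfrak N^N_{\alpha\beta,f}(r,D)+o(\mathfrak T_{\alpha\beta,f}(r))$ is rearranged: one bounds $\sum_i n\,\mathfrak N_{\alpha\beta,f}(r,\mathcal H_i)$ crudely by $n\cdot N\cdot\mathfrak T_{\alpha\beta,f}(r)+O(1)$ via the first main theorem (Corollary \ref{cor3}), moves the resulting $nN\,\mathfrak T$ and $dN\,\mathfrak T$ terms to the left, and rewrites $\sum_i\mathfrak N^N_{\alpha\beta,f}(r,D_i)$ as $\sum_i\mathfrak N_{\alpha\beta,f}(r,D_i)-\sum_i\big(\mathfrak N_{\alpha\beta,f}(r,D_i)-\mathfrak N^N_{\alpha\beta,f}(r,D_i)\big)$, giving exactly the claimed form with leading coefficient $n-(d+N+1)N$ on $\mathfrak T_{\alpha\beta,f}(r)$ and the defect-type sum $\sum_{i=0}^N(\mathfrak N_{\alpha\beta,f}(r,D_i)-\mathfrak N^N_{\alpha\beta,f}(r,D_i))$ also on the left. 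The main obstacle I anticipate is the bookkeeping in the last step: getting the multiplicities and truncation levels to match exactly so that the coefficient is precisely $n-(d+N+1)N$ rather than something off by a bounded amount, which requires care in handling the zeros of $\mathcal H_i$ versus $Q_i$ inside $g_i$ and in verifying that the general-position hypothesis on $\{\mathcal H_i^n Q_i=0\}$ is strong enough to keep the truncated Veronese counting function controlled by the truncated counting functions of $f$ against the $D_i$ — essentially an angular-domain analogue of Ru's lemma, for which the Tsuji-characteristic versions of the first main theorem established above are exactly what is needed.
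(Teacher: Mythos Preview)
Your overall idea---pass from $f$ to the curve $F=(\mathcal H_0^nQ_0(f):\dots:\mathcal H_N^nQ_N(f))$ in $\mathbb P^N$ and apply Theorem~\ref{th6} to $F$ with the $N+2$ hyperplanes $\{y_i=0\}$ and $\{\sum_i y_i=0\}$---is exactly the paper's route; the Veronese detour is unnecessary (and your claim that the Veronese image of $f$ lies in an $N$-dimensional linear subspace is false, since algebraic nondegeneracy of $f$ forces linear nondegeneracy in $\mathbb P^{n_0}$). What matters is the projected curve $F$, and for that curve there are two genuine gaps in your argument.

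First, you need $\mathfrak T_{\alpha\beta,F}(r)\ge(n+d)\,\mathfrak T_{\alpha\beta,f}(r)+O(1)$, and this does \emph{not} follow from the first main theorem or from the degree of the Veronese map: projection can only decrease the characteristic. The paper obtains it from Hilbert's Nullstellensatz---general position of the $\{\mathcal H_i^nQ_i=0\}$ forces $x_k^{m_k}\in(\mathcal H_0^nQ_0,\dots,\mathcal H_N^nQ_N)$, hence $\|f\|^{n+d}\le c\max_i|\mathcal H_i^nQ_i(f)|$. Second, your truncation count is off and, as written, would make the inequality vacuous. Since $n>N$, every zero of $(\mathcal H_i\circ f)^n$ has multiplicity at least $n>N$, so after truncation to level $N$ it contributes $N$, not $n$ times anything; thus
\[
\mathfrak N^N_{\alpha\beta,F}(r,H_i)\le\mathfrak N^N_{\alpha\beta,f}(r,D_i)+N\,\overline{\mathfrak N}_{\alpha\beta,f}(r,\mathcal H_i)\le\mathfrak N^N_{\alpha\beta,f}(r,D_i)+N\,\mathfrak T_{\alpha\beta,f}(r),
\]
and summing over $i=0,\dots,N$ gives $N(N+1)\,\mathfrak T_{\alpha\beta,f}$, not $nN\,\mathfrak T_{\alpha\beta,f}$. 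Finally, the full counting functions $\sum_i\mathfrak N_{\alpha\beta,f}(r,D_i)$ appearing on the left of the target must be inserted into the \emph{lower} bound on $\mathfrak T_{\alpha\beta,F}$ via the first main theorem $d\,\mathfrak T_{\alpha\beta,f}\ge\mathfrak N_{\alpha\beta,f}(r,D_i)$ (at the cost of $(N+1)d\,\mathfrak T_{\alpha\beta,f}$), not extracted from the right-hand side as you attempt. With these corrections the constant comes out to $(n+d)-(N+1)d-N(N+1)=n-N(d+N+1)$.
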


We give a hypersurfaces satisfying Theorem \ref{Th10}.
\begin{exam}\label{exam2} Let $D_i=\{z=(x_0:\dots:x_N)\in \mathbb P^{N}(\mathbb C),  x_i^d=0\}, 
0\le i\le N,$ be hypersurfaces of degree $d.$  Let $\mathcal H_i=\{z=(x_0:\dots:x_N)\in \mathbb P^{N}(\mathbb C), 
\sum_{t=0}^{i}x_t=0\}.$
We see that the hypersurfaces
 $\{(\sum_{t=0}^{i}x_t)^{n}x_i^{d}=0\}, 0\le i\le N,$ are in general position in $\mathbb P^N(\mathbb C).$ Then 
 $$D=\{z\in \mathbb P^N(\mathbb C), \sum_{i=0}^{N}(\sum_{t=0}^{i}x_t)^{n}x_i^{d}=0\}$$ satisfies the 
Theorem \ref{Th10} with $n>N(d+N+1).$
\end{exam}

As an application of Theorem \ref{Th10}, we prove the uniqueness theorem for holomorphic curves on angular domain by
 inverse image of a hypersurface.
\begin{theorem}\label{Th11}
Let $f, g :  \Omega(\alpha, \beta) \to \mathbb P^N(\mathbb C)$ be two algebraically nondegenerate
holomorphic curves, and $n$ be a integer with $n>N(d+N+3).$ Let $D$ be a hypersurface as the same Theorem \ref{Th10}.  
Suppose that $f(z)=g(z)$ on $f^{-1}(D)\cup g^{-1}(D),$ then $f\equiv g.$
\end{theorem}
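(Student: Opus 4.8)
The plan is to argue by contradiction, combining the second main theorem of Theorem~\ref{Th10}, applied in turn to $f$ and to $g$, with the first main theorem of Corollary~\ref{cor3} applied to the Segre product of the two curves.

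Suppose $f\not\equiv g$. Fix reduced representations $(f_0,\dots,f_N)$ of $f$ and $(g_0,\dots,g_N)$ of $g$ on $\Omega(\alpha,\beta)$ (which is simply connected, so these exist), and let $Q=\sum_{i=0}^{N}\mathcal H_i^nQ_i$ be the polynomial defining $D$; write $Q(f)$ and $Q(g)$ for its values on these representations. Since $f\not\equiv g$, the rows $(f_0,\dots,f_N)$ and $(g_0,\dots,g_N)$ are not proportional, so one $2\times2$ minor, say $h:=f_{i_0}g_{j_0}-f_{j_0}g_{i_0}$, is a holomorphic function $\not\equiv0$ on $\Omega(\alpha,\beta)$. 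The first thing to record is the set-theoretic identity $f^{-1}(D)=g^{-1}(D)$: a point $z_0\in f^{-1}(D)$ lies in $f^{-1}(D)\cup g^{-1}(D)$, hence $f(z_0)=g(z_0)$, hence $g(z_0)=f(z_0)\in D$; the reverse inclusion is symmetric. Thus $Q(f)$ and $Q(g)$ have the same zero set, and at each such zero $z_0$ we have $f(z_0)=g(z_0)$ in $\PP^N(\C)$, so $h(z_0)=0$.

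The heart of the proof is an estimate for $\mathfrak N^{1}_{\alpha\beta,f}(r,D)$. I would introduce the holomorphic curve $\Phi$ whose reduced representation consists of all products $f_ig_j$, $0\le i,j\le N$ (they have no common zero on $\Omega(\alpha,\beta)$, since neither $(f_i)$ nor $(g_j)$ does); as $\|\Phi\|=\|f\|\,\|g\|$, one gets $\mathfrak T_{\alpha\beta,\Phi}(r)=\mathfrak T_{\alpha\beta,f}(r)+\mathfrak T_{\alpha\beta,g}(r)+O(1)$. Now $h$ is a nonzero linear form in the two coordinates $f_{i_0}g_{j_0}$ and $f_{j_0}g_{i_0}$ of $\Phi$, so it cuts out $\Phi^{-1}(H)$ for a hyperplane $H$, and by the previous paragraph every zero of $Q(f)$ lies in $\Phi^{-1}(H)$. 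Applying Corollary~\ref{cor3} to the pair $(\Phi,H)$ gives
\[
\mathfrak N^{1}_{\alpha\beta,f}(r,D)\le\mathfrak N_{\alpha\beta,\Phi}(r,H)\le\mathfrak T_{\alpha\beta,\Phi}(r)+O(1)=\mathfrak T_{\alpha\beta,f}(r)+\mathfrak T_{\alpha\beta,g}(r)+O(1),
\]
and since a zero of $Q(f)$ is counted at least once in $\mathfrak N^{1}_{\alpha\beta,f}(r,D)$ and at most $N$ times in $\mathfrak N^{N}_{\alpha\beta,f}(r,D)$, also $\mathfrak N^{N}_{\alpha\beta,f}(r,D)\le N\big(\mathfrak T_{\alpha\beta,f}(r)+\mathfrak T_{\alpha\beta,g}(r)\big)+O(1)$, and likewise with $f$ and $g$ exchanged.

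To conclude, apply Theorem~\ref{Th10} to $f$, discard the nonnegative term $\sum_{i=0}^{N}\big(\mathfrak N_{\alpha\beta,f}(r,D_i)-\mathfrak N^{N}_{\alpha\beta,f}(r,D_i)\big)$, and insert the last bound to get $\big(n-(d+N+2)N\big)\mathfrak T_{\alpha\beta,f}(r)\le N\mathfrak T_{\alpha\beta,g}(r)+o(\mathfrak T_{\alpha\beta,f}(r))$, together with the symmetric inequality; adding the two and rearranging yields
\[
\big(n-(d+N+3)N\big)\big(\mathfrak T_{\alpha\beta,f}(r)+\mathfrak T_{\alpha\beta,g}(r)\big)\le o\big(\mathfrak T_{\alpha\beta,f}(r)+\mathfrak T_{\alpha\beta,g}(r)\big).
\]
Because $n>N(d+N+3)$, letting $r\to\infty$ outside the finite exceptional set contradicts the (unbounded) growth of $\mathfrak T_{\alpha\beta,f}+\mathfrak T_{\alpha\beta,g}$; hence $f\equiv g$. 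The only step that is not bookkeeping with Theorem~\ref{Th10} is the construction of $\Phi$ and the appeal to Corollary~\ref{cor3}: one must check, inside the Tsuji/angular formalism, that $(f_ig_j)$ is a reduced representation with $\|\Phi\|=\|f\|\,\|g\|$ and that Corollary~\ref{cor3} indeed gives $\mathfrak N_{\alpha\beta,\Phi}(r,H)\le\mathfrak T_{\alpha\beta,\Phi}(r)+O(1)$. Once that is in place, the exact threshold $n>N(d+N+3)$ is precisely what the two substitutions and the final addition consume.
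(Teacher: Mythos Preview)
Your proof is correct and follows essentially the same route as the paper: assume $f\not\equiv g$, use the sharing hypothesis to show every zero of $Q(f)$ is a zero of the cross term $f_{i_0}g_{j_0}-f_{j_0}g_{i_0}$, bound $\mathfrak N^{N}_{\alpha\beta,f}(r,D)$ by $N(\mathfrak T_{\alpha\beta,f}(r)+\mathfrak T_{\alpha\beta,g}(r))$ via the first main theorem, feed this into Theorem~\ref{Th10} for $f$ and for $g$, and add. The only cosmetic difference is that the paper works directly with the meromorphic function $f_{i_0}/f_{j_0}-g_{i_0}/g_{j_0}$ and the Tsuji first main theorem for meromorphic functions, whereas you package the same estimate through the Segre-type curve $\Phi=(f_ig_j)$ and Corollary~\ref{cor3}; the resulting bound and the final arithmetic are identical.
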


In my knowledge, up to now, Theorem \ref{Th11} is a first result for uniqueness problem of holomorphic curve by 
inverse image of a hypersurface on angular domain. 

When $\alpha=0, \beta=2\pi,$ this means $\Omega(\alpha, \beta)=\mathbb C,$ we obtain some uniqueness results for holomorphic
 curves on complex plane as  following:

\begin{cor}\label{CorTh11}
Let $f, g:  \mathbb C \to \mathbb P^N(\mathbb C)$ be two algebraically nondegenerate
holomorphic curves, and $n$ be a integer with $n>N(d+N+3).$ Let $D$ be a hypersurface as the same Theorem \ref{Th10}.  
Suppose that $f(z)=g(z)$ on $f^{-1}(D)\cup g^{-1}(D),$ then $f\equiv g.$
\end{cor}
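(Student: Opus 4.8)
The plan is to reduce the uniqueness assertion to a dimension count obtained by applying Theorem \ref{Th10} to both $f$ and $g$. Suppose, for contradiction, that $f\not\equiv g$. Writing reduced representations $\blf=(f_0:\dots:f_N)$ and $\blF=(g_0:\dots:g_N)$, since $V=\PP^N(\C)$ and both curves are algebraically nondegenerate, there exists an index pair $(i,j)$ with $h:=f_ig_j-f_jg_i\not\equiv 0$; in fact I would argue that for \emph{every} coordinate of $\PP^N$ the corresponding $2\times 2$ minor is a nonzero holomorphic function on $\Omega(\alpha,\beta)$, since if all of them vanished identically then $f\equiv g$. The key observation is that every zero of $Q(\blf)$ lying in $f^{-1}(D)$ is, by hypothesis, also a point where $f=g$, hence a zero of $h$; and symmetrically every zero of $Q(\blF)$ in $g^{-1}(D)$ is a zero of $h$. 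Counting these contributions with the truncation level $N$, one gets the estimate
\begin{align*}
\mathfrak N^{N}_{\alpha\beta,f}(r,D)+\mathfrak N^{N}_{\alpha\beta,g}(r,D)
&\le \mathfrak N_{\alpha\beta,h}(r)\le \mathfrak T_{\alpha\beta,h}(r)+O(1)\\
&\le N\bigl(\mathfrak T_{\alpha\beta,f}(r)+\mathfrak T_{\alpha\beta,g}(r)\bigr)+O(1),
\end{align*}
where the last step uses that $h$ is a bilinear form in the reduced representations, so its Tsuji characteristic is bounded by $N$ times the sum of the characteristics of $f$ and $g$ plus a bounded term (an analogue of the first main theorem/Jensen formula for Tsuji characteristics on an angle).

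Next I would invoke Theorem \ref{Th10} separately for $f$ and for $g$. Each application gives
$$
\bigl(n-(d+N+1)N\bigr)\mathfrak T_{\alpha\beta,f}(r)
\;\le\; \mathfrak N^{N}_{\alpha\beta,f}(r,D)+o(\mathfrak T_{\alpha\beta,f}(r)),
$$
and likewise for $g$, after dropping the nonnegative terms $\sum_i(\mathfrak N_{\alpha\beta,f}(r,D_i)-\mathfrak N^{N}_{\alpha\beta,f}(r,D_i))$ on the left. Adding the two inequalities and substituting the minor estimate from the previous paragraph yields
$$
\bigl(n-(d+N+1)N\bigr)\bigl(\mathfrak T_{\alpha\beta,f}(r)+\mathfrak T_{\alpha\beta,g}(r)\bigr)
\;\le\; N\bigl(\mathfrak T_{\alpha\beta,f}(r)+\mathfrak T_{\alpha\beta,g}(r)\bigr)+o\bigl(\mathfrak T_{\alpha\beta,f}(r)+\mathfrak T_{\alpha\beta,g}(r)\bigr).
$$
Hence $n-(d+N+1)N\le N$, i.e. $n\le (d+N+2)N$, contradicting the hypothesis $n>N(d+N+3)$. (The gap of one extra $N$ relative to $N(d+N+2)$ is exactly what absorbs the $o(\cdot)$ term and any $O(\log r)$ slack, which is why the hypothesis is stated with $d+N+3$ rather than $d+N+2$.) This forces all the relevant minors to vanish identically, whence $f\equiv g$.

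The main obstacle I anticipate is the bookkeeping for the counting-function inequality $\mathfrak N^{N}_{\alpha\beta,f}(r,D)+\mathfrak N^{N}_{\alpha\beta,g}(r,D)\le \mathfrak N_{\alpha\beta,h}(r)+O(1)$: one must check that a common zero of $Q(\blf)$ and $Q(\blF)$ is not double-counted, that points on the boundary rays $\arg z=\alpha,\beta$ and points with $|z|\le 1$ contribute only the harmless $O(1)$, and — most delicate — that the truncation to level $N$ on the left is genuinely dominated by the \emph{untruncated} count of zeros of $h$ weighted by the Tsuji kernel $\bigl(\tfrac{\sin k(\psi_n-\alpha)}{\rho_n^{k}}-\tfrac1{r^k}\bigr)$. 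The geometric input here is that at a point $z_0\in f^{-1}(D)\cap g^{-1}(D)$ where $f(z_0)=g(z_0)$, the vanishing order of $h$ at $z_0$ is at least the minimum of the contact orders of $f$ and $g$ with $D$ at $z_0$, which is $\ge 1$; truncating the left side at $N$ and not truncating $h$ on the right gives the needed slack without any hypothesis relating the multiplicities. A secondary technical point is the estimate $\mathfrak T_{\alpha\beta,h}(r)\le N(\mathfrak T_{\alpha\beta,f}(r)+\mathfrak T_{\alpha\beta,g}(r))+O(1)$, which I would derive from the definition of the Tsuji characteristic together with the elementary bound $\|\,(f_ig_j-f_jg_i)_{i,j}\,\|\le \|\blf\|\cdot\|\blF\|$ and the Jensen-type formula implicit in Theorem \ref{th3}/Corollary \ref{cor3}.
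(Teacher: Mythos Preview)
Your overall architecture matches the paper's: assume $f\not\equiv g$, pick a nonvanishing minor $h=f_ig_j-f_jg_i$ (the paper uses the equivalent meromorphic form $f_i/f_j-g_i/g_j$), apply Theorem \ref{Th10} to each curve, and derive a numerical contradiction. The gap is in your counting step, and it propagates to a wrong explanation of the constant $d+N+3$.

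The inequality $\mathfrak N^{N}_{\alpha\beta,f}(r,D)+\mathfrak N^{N}_{\alpha\beta,g}(r,D)\le \mathfrak N_{\alpha\beta}(r,1/h)$ is false in general. At a point $z_0\in f^{-1}(D)\cup g^{-1}(D)$ the hypothesis gives only $f(z_0)=g(z_0)$, hence $\text{ord}_{z_0}h\ge 1$; there is no mechanism forcing $\text{ord}_{z_0}h$ to dominate $\min(\text{ord}_{z_0}Q(\blf),N)$, let alone the sum of the two truncated orders. Your parenthetical claim that ``the vanishing order of $h$ at $z_0$ is at least the minimum of the contact orders of $f$ and $g$ with $D$'' is unsupported: contact with $D$ and proximity of $f$ to $g$ are unrelated. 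The paper handles this by first using the trivial bound $\mathfrak N^{N}_{\alpha\beta,f}(r,D)\le N\,\mathfrak N^{1}_{\alpha\beta,f}(r,D)$ and only then comparing the \emph{level-one} count to the zeros of $h$, which is legitimate since each relevant $z_0$ contributes $1$ to $\mathfrak N^{1}$ and $\ge 1$ to the zero count of $h$. This is where the factor $N$ actually enters; your placement of $N$ in the bound $\mathfrak T_{\alpha\beta}(r,h)\le N(\mathfrak T_{\alpha\beta,f}+\mathfrak T_{\alpha\beta,g})$ is spurious, since $h$ has degree $1$ in each reduced representation and the correct estimate is $\mathfrak T_{\alpha\beta}(r,h)\le \mathfrak T_{\alpha\beta,f}(r)+\mathfrak T_{\alpha\beta,g}(r)+O(1)$.

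Doing the bookkeeping correctly one gets $\mathfrak N^{N}_{\alpha\beta,f}(r,D)\le N(\mathfrak T_{\alpha\beta,f}+\mathfrak T_{\alpha\beta,g})+O(1)$ and the symmetric bound for $g$; summing costs $2N(\mathfrak T_{\alpha\beta,f}+\mathfrak T_{\alpha\beta,g})$, and combining with Theorem \ref{Th10} yields $(n-(d+N+1)N-2N)(\mathfrak T_{\alpha\beta,f}+\mathfrak T_{\alpha\beta,g})\le o(\cdot)$. That $2N$, not any need to ``absorb the $o(\cdot)$ term'', is the reason the hypothesis reads $n>N(d+N+3)$ rather than $n>N(d+N+2)$.
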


By using method of Ru \cite{Ru3} and Ru et. al. \cite{Ru4}, we are easy to get some results as follows:

\begin{theorem}\label{th7} Let $V\subset \PP^{N}(\C)$ be a complex projective variety of dimension $n\ge 1.$ Let 
$D_1, \dots, D_q$ be hypersurfaces in $\PP^N(\C)$ of degree $d_j,$ located in general position on $V.$ Let $d$ be the least 
common
 multiple of the $d_i,$ $i=1, \dots, q.$ Let $ f: \Omega(\alpha, \beta)  \ax V$  be an algebraically non-degenerate
 holomorphic map. Let $\varepsilon >0$
 and $$ M \ge \dfrac{n^nd^{n^2+n}(19nI(\varepsilon^{-1}))^n(\deg V)^{n+1}}{n!},$$
where $I(x):=\min\{k\in \N: k>x\}$ for a positive real number $x.$ Then
$$\quad (q(1-\varepsilon/3)-(n+1)-\varepsilon/3)\mathfrak T_{ \alpha \beta, f}(r)\leqslant\sum_{l=1}^{q}d_l^{-1} 
{\mathfrak {N}^{M}}_{\alpha \beta, f}(r,Q_l)+O(\log \mathfrak T_{\alpha \beta, f}(r)+\log r)$$
holds for all $r\in (0, +\infty)$ outside a set of finite measure.
\end{theorem}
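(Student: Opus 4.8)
\emph{Plan of proof.} The plan is to transplant, essentially verbatim, Ru's proof of the Second Main Theorem for holomorphic curves into projective varieties (\cite{Ru3}, in the form with explicit truncation level and explicit constant of \cite{Ru4}) from the classical setting on $\C$ to the Tsuji/angular setting on $\Omega(\alpha,\beta)$. The key observation is that Ru's argument, apart from two analytic inputs, is purely algebro-geometric: it rests on the Chow and Hilbert weights of $V$, on Mumford's comparison between them, and on bounds for the Hilbert function of $V$, and is therefore indifferent to which family of characteristic, proximity and counting functions is used, provided this family enjoys (i) additivity and non-negativity of the proximity term, (ii) a First Main Theorem, and (iii) a Second Main Theorem for hyperplanes in general position. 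Here (i) is immediate from the definitions, (ii) is Theorem \ref{th3} (equivalently Corollary \ref{cor3}), and (iii) is Theorem \ref{th6}.

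\emph{Reductions.} First I would pass to equal degrees: replacing $Q_l$ by $Q_l^{d/d_l}$ makes all the $D_l$ of degree $d$ and still in general position on $V$, leaves $\mathfrak T_{\alpha\beta,f}$ untouched, and replaces the counting function by $d/d_l$ times the old one (with the truncation level rescaled, which is harmless given how large $M$ is required to be); this is where the weights $d_l^{-1}$ in the statement come from. Second, since $D_1,\dots,D_q$ are in general position on the $n$-dimensional variety $V$, for every $z$ the point $f(z)$ lies on $\mathrm{Supp}\,D_l$ for at most $n$ indices $l$; hence, up to $O(1)$, the sum $\sum_{l=1}^{q}\mathfrak m_{\alpha\beta,f}(r,Q_l)$ is dominated by the proximity-type integral of $\max_{J}\sum_{l\in J}\log\!\big(\|f\|^{d}/|Q_l(f)|\big)$ taken over a fixed finite family of index subsets $J$. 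This reduction is exactly as in \cite{Ru3} and uses only the integral definition of $\mathfrak m_{\alpha\beta,f}$.

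\emph{Algebraic core and assembly.} For each such subset $J$ (which one may take of size $n+1$) and each large auxiliary integer $m$, Mumford's comparison of the Hilbert weight of $V$ with its Chow weight, together with the estimate on $h_V(md)=\dim H^{0}(V,\mathcal O_V(md))$, yields a suitably balanced filtration of $H^{0}(V,\mathcal O_V(md))$ whose graded pieces are spanned by degree-$m$ monomials in $\{Q_l : l\in J\}$, and a corresponding basis $\psi_{1},\dots,\psi_{H}$ ($H=h_V(md)$); realized on the degree-$md$ Veronese embedding $\Phi=\Phi_{md}$, these give a linearly non-degenerate curve $F:=\Phi\circ f\colon\Omega(\alpha,\beta)\to\PP^{M'}(\C)$, $M'=H-1$, and hyperplanes $\{\psi_i=0\}$ in general position. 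Because $f(z)\in V$, one has $\psi_i(F(z))=\prod_{l\in J}Q_l(f)(z)^{a_{i,l}}$ with $\sum_{l}a_{i,l}=m$, and $\|F\|\asymp\|f\|^{md}$, so
\[
\sum_{i=1}^{H}\log\frac{\|F(z)\|}{|\psi_i(F(z))|}=\sum_{l\in J}\Big(\sum_{i=1}^{H}a_{i,l}\Big)\log\frac{\|f(z)\|^{d}}{|Q_l(f)(z)|}+O(1),
\]
where the balancing of the filtration ensures, in the precise form supplied by Mumford's estimate, that $\sum_i a_{i,l}/(mH)\to 1/(n+1)$ uniformly in $l\in J$ as $m\to\infty$. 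Integrating this pointwise identity into the proximity functions, applying Theorem \ref{th6} to $F$ against $\{\psi_i=0\}$, and using $\mathfrak T_{\alpha\beta,F}(r)=md\cdot\mathfrak T_{\alpha\beta,f}(r)+O(1)$ — which follows from Corollary \ref{cor3} applied to $F$ and the coordinate hyperplane corresponding to the monomial $f_0^{md}$, via $\|F\|\asymp\|f\|^{md}$ — reduces the problem to the classical bookkeeping: one bounds $\mathfrak N^{M'}_{\alpha\beta,F}(r,\{\psi_i=0\})$ by $\sum_{l}a_{i,l}\,\mathfrak N^{M}_{\alpha\beta,f}(r,Q_l)$ using $\min\{\sum_l a_{i,l}\nu_l,\,M'\}\le\sum_l a_{i,l}\min\{\nu_l,M\}$, valid once $M\ge M'=h_V(md)-1$, which is exactly what forces the explicit lower bound on $M$ in the hypothesis. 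Letting $m\to\infty$ and optimizing over $m$ (and over the slack in Mumford's inequality) produces the coefficient $q(1-\varepsilon/3)-(n+1)-\varepsilon/3$, just as in \cite{Ru3,Ru4}. Finally, the First Main Theorem (Theorem \ref{th3}), $d\,\mathfrak T_{\alpha\beta,f}(r)=\mathfrak m_{\alpha\beta,f}(r,Q_l)+\mathfrak N_{\alpha\beta,f}(r,Q_l)+O(1)$, turns the resulting proximity inequality into the asserted counting inequality, with the error term $O(\log\mathfrak T_{\alpha\beta,f}(r)+\log r)$ and the exceptional set of finite measure inherited directly from Theorem \ref{th6} (note $\log\mathfrak T_{\alpha\beta,F}(r)=\log\mathfrak T_{\alpha\beta,f}(r)+O(1)$).

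\emph{The main obstacle.} The difficulty is not the algebra, which is copied from \cite{Ru3,Ru4} line for line, but making sure that copying is legitimate, i.e.\ nailing down the analytic dictionary between $f$ and its Veronese composite $F$: that $F=\Phi_{md}\circ f$ is genuinely a reduced holomorphic curve into $\PP^{M'}(\C)$; that $\|F\|\asymp\|f\|^{md}$ with constants depending only on $V$ and the chosen basis; that consequently $\mathfrak T_{\alpha\beta,F}=md\,\mathfrak T_{\alpha\beta,f}+O(1)$, $\mathfrak m_{\alpha\beta,F}(r,\{\psi_i=0\})=\sum_l a_{i,l}\,\mathfrak m_{\alpha\beta,f}(r,Q_l)+O(1)$, and $\mathfrak N_{\alpha\beta,F}(r,\{\psi_i=0\})=\sum_l a_{i,l}\,\mathfrak N_{\alpha\beta,f}(r,Q_l)$ — the last because the zeros of $\psi_i(F)=\prod_l Q_l(f)^{a_{i,l}}$ in $\Xi(\alpha,\beta;r)$, carrying their angular weights $\sin k(\alpha_n-\alpha)/|a_n|^{k}-1/r^{k}$, are precisely the zeros of the $Q_l(f)$ with the same weights — together with the corresponding truncation comparison; and finally that algebraic non-degeneracy of $f$ in $V$ is equivalent to linear non-degeneracy of $F$ in $\PP^{M'}(\C)$, so that Theorem \ref{th6} genuinely applies. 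Once these elementary but essential compatibilities are in place, the remainder of the argument is a transcription of \cite{Ru3,Ru4}.
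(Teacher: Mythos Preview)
Your proposal is correct and is exactly the approach the paper takes: the paper does not give a separate proof of this theorem at all, but simply states that ``by using method of Ru \cite{Ru3} and Ru et.\ al.\ \cite{Ru4}, we are easy to get'' it, relying on Theorem~\ref{th3} and Theorem~\ref{th6} as the angular analytic inputs. Your write-up is in fact considerably more detailed than what the paper provides.
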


\begin{theorem}\label{th9} Let $V\subset \PP^{N}(\C)$ be a complex projective variety of dimension $n\ge 1.$ Let 
$D_1, \dots, D_q$ be hypersurfaces in $\PP^N(\C)$ of degree $d_j,$ located in general position on $V.$ Let $d$ be the least 
common
 multiple of the $d_i,$ $i=1, \dots, q.$ Let $ f: \mathbb C  \ax V$  be an algebraically non-degenerate
 holomorphic map. Let $\varepsilon >0$
 and $$ M \ge \dfrac{n^nd^{n^2+n}(19nI(\varepsilon^{-1}))^n(\deg V)^{n+1}}{n!},$$
where $I(x):=\min\{k\in \N: k>x\}$ for a positive real number $x.$ Then
$$\quad (q(1-\varepsilon/3)-(n+1)-\varepsilon/3)S_{\alpha\beta, f}(r)\leqslant\sum_{l=1}^{q}d_l^{-1} 
C^{M}_{\alpha \beta, f}(r,Q_l)+O(\log  T_{f}(r)+\log r)$$
holds for all $r\in (0, +\infty)$ outside a set of finite measure.
\end{theorem}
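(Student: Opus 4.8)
\medskip
\noindent\textbf{Proof sketch.}
The plan is to transplant Ru's reduction of a second main theorem for hypersurfaces in general position on a projective variety to a Cartan-type second main theorem for hyperplanes (\cite{Ru3,Ru4}) into the Tsuji framework developed above, using Theorem \ref{th4} in place of Cartan's theorem, Theorem \ref{th1} (and Corollary \ref{cor1}) in place of the first main theorem, and the logarithmic derivative estimate underlying Theorem \ref{th4}. The companion statement Theorem \ref{th7} comes out of the identical argument with the Tsuji characteristics $\mathfrak{T}_{\alpha\beta}$, $\mathfrak{N}_{\alpha\beta}$ and Theorem \ref{th6} in the corresponding roles, so I only describe the case of Theorem \ref{th9}. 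As a preliminary normalization, replacing each $D_j$ by the degree-$d$ hypersurface with defining polynomial $Q_j^{d/d_j}$ keeps the family in general position on $V$ and only introduces the factor $d_l^{-1}$ into the counting functions: a zero of $Q_l(\mathbf{f})$ of multiplicity $\mu$ becomes one of multiplicity $\tfrac{d}{d_l}\mu$ for $Q_l^{d/d_l}(\mathbf{f})$, so its truncation at level $M$ is at most $\tfrac{d}{d_l}$ times that of $Q_l(\mathbf{f})$. Hence one may assume $d_1=\dots=d_q=d$.

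The heart of the argument is Ru's filtration construction, which is pointwise and algebraic and thus applies unchanged. Fix an integer $m$ that is a large multiple of $d$; the precise threshold is controlled by $\varepsilon$, $\deg V$ and $n$ through the Hilbert function $h_V(m)=\tfrac{\deg V}{n!}m^n+O(m^{n-1})$, and it is this threshold that the lower bound on $M$ encodes. For a point $z\in\mathbb{C}$ outside the discrete set where some $Q_j(\mathbf{f})$ vanishes, reorder the indices so that $\|\mathbf{f}(z)\|^{d}/|Q_1(\mathbf{f})(z)|\ge\cdots\ge\|\mathbf{f}(z)\|^{d}/|Q_q(\mathbf{f})(z)|$; the filtration of $H^0(V,\mathcal{O}_V(m))$ attached to this flag, together with Ru's Hilbert-function estimate, yields a basis $\psi_1,\dots,\psi_{h_V(m)}$ drawn from a finite list depending only on $q$ and $n$, together with a bound of the shape
$$\sum_{j=1}^{q}\log\frac{\|\mathbf{f}(z)\|^{d}}{|Q_j(\mathbf{f})(z)|}\ \le\ \Big(\frac{(n+1)d}{m}+\frac{\varepsilon}{3}\Big)\frac{1}{h_V(m)}\sum_{i=1}^{h_V(m)}\log\frac{\|\mathbf{F}(z)\|}{|\psi_i(\mathbf{f})(z)|}+O(1),$$
where $\mathbf{F}=(\psi_1(\mathbf{f}):\cdots:\psi_{h_V(m)}(\mathbf{f}))$ and the $O(1)$ is uniform in $z$. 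Integrating this against the kernels defining $A_{\alpha\beta}$ and $B_{\alpha\beta}$ turns the left-hand side into $\sum_j\big(A_{\alpha\beta,f}(r,D_j)+B_{\alpha\beta,f}(r,D_j)\big)$ and bounds it in terms of the angular proximity of the linearly non-degenerate curve $F\colon\mathbb{C}\to\mathbb{P}^{h_V(m)-1}(\mathbb{C})$; linear non-degeneracy of $F$ comes from algebraic non-degeneracy of $f$, and $S_{\alpha\beta,F}(r)=m\,S_{\alpha\beta,f}(r)+O(1)$ follows from Theorem \ref{th1} applied to the degree-$m$ hypersurface pulled back from a hyperplane of $\mathbb{P}^{h_V(m)-1}(\mathbb{C})$.

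Now I would apply Theorem \ref{th4} to $F$, summed over the finitely many bases that can occur, so that at each point only a genuine general-position configuration of coordinate hyperplanes is used; this bounds the angular proximity of $F$ by $S_{\alpha\beta,F}(r)$ plus the truncated angular counting functions $C^{h_V(m)-1}_{\alpha\beta,F}(r,\{\psi_i=0\})$ plus an error $O(\log T_F(r)+\log r)=O(\log T_f(r)+\log r)$. Because, up to bounded factors, each $\psi_i(\mathbf{f})$ is a product of powers of the $Q_j(\mathbf{f})$ prescribed by the flag, the zeros of $\psi_i(\mathbf{f})$ counted with truncation $h_V(m)-1$ are dominated, divisor by divisor, by the zeros of the $Q_j(\mathbf{f})$ counted with truncation $M$; this comparison of orders of vanishing of a member of the filtration is precisely what fixes the admissible value of $M$. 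Assembling these estimates, using $S_{\alpha\beta,F}=m\,S_{\alpha\beta,f}+O(1)$ and collecting the bounded, $o(\cdot)$ and finite-exceptional-set contributions (the last inherited from Theorem \ref{th4} through the Tsuji analogue of Borel's growth lemma), one arrives at
$$\big(q(1-\varepsilon/3)-(n+1)-\varepsilon/3\big)S_{\alpha\beta,f}(r)\ \le\ \sum_{l=1}^{q}d_l^{-1}\,C^{M}_{\alpha\beta,f}(r,Q_l)+O(\log T_f(r)+\log r)$$
off a set of finite measure, which is the assertion.

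The step I expect to be the genuine obstacle is not Ru's algebra, which is imported verbatim, but the Nevanlinna dictionary inside the Tsuji framework: checking that $S_{\alpha\beta}$ is multiplicative up to $O(1)$ under the degree-$m$ embedding, that the truncated angular counting functions of $F$ pull back to those of $f$ with the stated constants, that the finite-measure exceptional set and the error $O(\log T_f(r)+\log r)$ propagate correctly from $F$ back to $f$, and that the sum over the finitely many adapted bases is handled cleanly within the angular second main theorem. Verifying that the degree-equalizing substitution preserves general position on $V$, and tracking $m$ and $M$ through Ru's Hilbert-function estimate so that the two $\varepsilon/3$ terms emerge, are the remaining routine but delicate bookkeeping points.
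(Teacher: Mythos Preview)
Your proposal is correct and is precisely the approach the paper intends: the paper does not spell out a proof of this theorem at all, stating only that ``By using method of Ru \cite{Ru3} and Ru et.\ al.\ \cite{Ru4}, we are easy to get'' Theorems \ref{th7} and \ref{th9}, and your sketch is exactly that transplantation of Ru's filtration argument into the Tsuji framework, with Theorem \ref{th4} replacing Cartan's second main theorem and Theorem \ref{th1} supplying the first main theorem and the relation $S_{\alpha\beta,F}(r)=m\,S_{\alpha\beta,f}(r)+O(1)$. The bookkeeping points you flag (degree normalization, propagation of the error term and exceptional set from $F$ to $f$, and the Hilbert-function tracking of $m$ and $M$) are handled verbatim as in \cite{Ru4}, so there is nothing further to add.
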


\section{Some preliminaries in angular Nevanlinna theory for meromorphic functions}
\def\theequation{2.\arabic{equation}}
\setcounter{equation}{0} 
First, we remind some definitions which is contained the book of  A. A. Goldberg and I. V. Ostrovskii. We consider the set 
$$\Omega(\alpha, \beta; r)=\Omega(\alpha, \beta)\cap\{1<|z|<r\}.$$
 Let $f$ be a meromorphic function on the angle $\overline\Omega (\alpha, \beta;r)$, $0<\beta-\alpha\le 2\pi$, $1\le r<\infty.$
 We recall that

\begin{align*}
A_{\alpha \beta}(r,f)&=\dfrac{k}{\pi} \int_1^{r}(\dfrac{1}{t^k}-\dfrac{t^k}{r^{2k}})[\log^{+}|f(te^{i\alpha})|
+\log^{+}|f(te^{i\beta})|]\dfrac{dt}{t};\\
 B_{\alpha\beta}(r, f)&=\dfrac{2k}{\pi r^k}\int_{\alpha}^{\beta}\log^{+}|f(re^{i\varphi})|.sin(k(\varphi-\alpha))d\varphi;\\
C_{\alpha\beta}(r, f)&=2k\int_{1}^{r}c_{\alpha\beta}(r, f)(\dfrac{1}{t^k}+\dfrac{t^k}{r^{2k}})\dfrac{dt}{t}\\
&=2\sum_{1\le \rho_n\le r, \alpha\le \psi_n\le \beta}(\dfrac{1}{{\rho_n}^{k}}-\dfrac{{\rho_n}^{k}}{r^{2k}})
sin\hspace{0.2cm}k(\psi_n-\alpha),
\end{align*}
where
 $$ c_{\alpha \beta}(r, f) =\sum_{1<\rho_n\le r, \alpha \le \psi_n\le \beta}sin(k(\psi_n-\alpha)),$$
and $\rho_ne^{i\psi_n}$ are poles of $f(z)$ counted according with multiplicity.
We denote $S_{\alpha \beta}(r, f)$ by the angular Nevanlinna characteristics on $\overline \Omega(\alpha, \beta;r)$ and
 defined as following:
$$ S_{\alpha \beta}(r, f) =A_{\alpha \beta}(r, f)+B_{\alpha \beta}(r, f)+C_{\alpha \beta}(r, f).$$
In order to prove  theorems, we need the following lemmas.
\begin{lemma}\cite{GO}{\bf{(Carleman formula)}}\label{lm1}  Let $f$ be a nonconstant meromorphic function in
 $\overline \Omega(\alpha, \beta;r).$ Then
\begin{align*}
C_{\alpha \beta}(r, \dfrac{1}{f}) -C_{\alpha \beta}(r, f)&= \dfrac{k}{\pi} \int_1^{r}(\dfrac{1}{t^k}-\dfrac{t^k}{r^{2k}})
[\log |f(te^{i\alpha})|+\log |f(te^{i\beta})|]\dfrac{dt}{t}\\
&+\dfrac{2k}{\pi r^k}\int_{\alpha}^{\beta}\log |f(re^{i\varphi})|.sin(k(\varphi-\alpha))d\varphi+O(1).
\end{align*}
\end{lemma}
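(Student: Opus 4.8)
The statement is the classical Carleman formula (recorded in \cite{GO}); the plan is to reduce it to a model half-annulus by a conformal change of variable and then apply Green's second identity with a well-chosen auxiliary harmonic function, so that the three families of boundary integrals that appear reproduce exactly the right-hand side, while the contributions of the zeros and poles reproduce the left-hand side.

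Concretely, I would first set $w=(ze^{-i\alpha})^{k}$ with $k=\pi/(\beta-\alpha)$, which maps $\overline\Omega(\alpha,\beta;r)$ conformally onto the closed half-annulus $\mathcal A=\{1\le |w|\le R,\ \operatorname{Im}w\ge 0\}$, $R=r^{k}$: the rays $\arg z=\alpha$ and $\arg z=\beta$ go to the segments $[1,R]$ and $[-R,-1]$, the arc $|z|=r$ goes to $|w|=R$, and $|z|=1$ goes to $|w|=1$. Let $g$ be the meromorphic function on $\mathcal A$ obtained by transporting $f$. With the substitutions $x=t^{k}$ and $\theta=k(\varphi-\alpha)$ one checks directly that a zero (resp.\ pole) $\rho e^{i\psi}$ of $f$ with $\alpha\le\psi\le\beta$ becomes a zero (resp.\ pole) of $g$ at $w=\rho^{k}e^{ik(\psi-\alpha)}$ of the same multiplicity, that $(\rho^{-k}-\rho^{k}r^{-2k})\sin k(\psi-\alpha)=(|w|^{-1}-|w|R^{-2})\sin(\arg w)$, and that the two integrals on the right of the Lemma become, respectively, $\tfrac1\pi\int_{1}^{R}(x^{-2}-R^{-2})[\log|g(x)|+\log|g(-x)|]\,dx$ and $\tfrac2{\pi R}\int_{0}^{\pi}\log|g(Re^{i\theta})|\sin\theta\,d\theta$. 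Thus the Lemma is equivalent to the Carleman formula on $\mathcal A$, whose left-hand side is $2\sum_{a_\mu}(|a_\mu|^{-1}-|a_\mu|R^{-2})\sin\phi_\mu-2\sum_{b_\nu}(|b_\nu|^{-1}-|b_\nu|R^{-2})\sin\psi_\nu$ over the zeros $a_\mu=|a_\mu|e^{i\phi_\mu}$ and poles $b_\nu=|b_\nu|e^{i\psi_\nu}$ of $g$ in $\mathcal A$.

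To prove this half-annulus identity I would apply Green's second identity to $u=\log|g|$ and the auxiliary harmonic function
\[
v(w)=-\operatorname{Im}\bigl(w^{-1}+wR^{-2}\bigr)=\bigl(|w|^{-1}-|w|R^{-2}\bigr)\sin(\arg w)
\]
on the region obtained from $\mathcal A$ by deleting small discs around the zeros and poles of $g$. The point of this choice is that $v$ vanishes identically on the two real segments and on the outer arc $|w|=R$; that its outward normal derivative equals $R^{-2}-x^{-2}$ on $[1,R]$ and $[-R,-1]$ and $-2R^{-2}\sin\theta$ on $|w|=R$; and that on the inner arc $|w|=1$ both $v$ and $\partial v/\partial n$ are bounded uniformly in $R$. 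Hence the boundary contribution from the real segments reproduces (up to sign) $\tfrac1\pi\int_{1}^{R}(x^{-2}-R^{-2})[\log|g(x)|+\log|g(-x)|]\,dx$, the outer arc reproduces $\tfrac2{\pi R}\int_{0}^{\pi}\log|g(Re^{i\theta})|\sin\theta\,d\theta$, and the inner arc produces a term depending only on $g$ and $\partial g/\partial n$ on $|w|=1$, i.e.\ only on $f$ on $|z|=1$, hence $O(1)$ uniformly in $r$. Finally, as the excised radii shrink to $0$, the logarithmic singularity of $u$ makes the circle integral around a zero of multiplicity $m$ at $a_\mu$ tend to $2\pi m\,v(a_\mu)$ and that around a pole of multiplicity $p$ at $b_\nu$ tend to $-2\pi p\,v(b_\nu)$. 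Collecting all terms in the identity $\int(u\,\partial_n v-v\,\partial_n u)\,ds=0$ and dividing by $\pi$ yields the half-annulus Carleman formula, and transporting back through $w=(ze^{-i\alpha})^{k}$ gives the Lemma.

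The steps that require care, and where the main obstacle lies, are: keeping the signs of the normal derivatives consistent across the three different pieces of $\partial\mathcal A$; the verification that the inner-arc term is genuinely $O(1)$, i.e.\ bounded independently of $r$ — this is precisely why $v$ must be the function above rather than, say, $\operatorname{Im}(w+R^{2}/w)$, whose values on $|w|=1$ grow like $R^{2}$; and the treatment of zeros or poles lying on $\partial\mathcal A$, which on the real segments contribute nothing (there $v\equiv 0$, consistently with $\sin 0=\sin\pi=0$), on $|w|=1$ affect only the $O(1)$ term, and on $|w|=R$ occur only for finitely many $r$, for which one extends the identity from generic $r$ using that both sides are continuous in $r$ (a new term enters the counting sum with value $0$ at the instant it appears, since $(\rho^{-k}-\rho^{k}r^{-2k})|_{\rho=r}=0$). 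Everything else is the routine change-of-variables bookkeeping indicated above; alternatively one may simply invoke \cite{GO}, where the formula is established.
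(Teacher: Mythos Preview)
The paper does not actually prove this lemma: it is stated with the citation \cite{GO} and used as a black box in the proof of Theorem~\ref{th1}, with no argument given. Your sketch is therefore not a comparison case but a genuine supplement --- and it is the classical proof of the Carleman formula (essentially the one in Goldberg--Ostrovskii or Tsuji): conformally normalize the sector to the upper half-annulus, take the auxiliary harmonic function $v(w)=(|w|^{-1}-|w|R^{-2})\sin(\arg w)$ that vanishes on the two radial segments and on the outer arc, apply Green's identity to $u=\log|g|$ and $v$, and read off the boundary and singular contributions. The computations you outline (vanishing of $v$ on three boundary pieces, its normal derivatives, the $2\pi m\,v(a_\mu)$ residues, and the $O(1)$ nature of the inner-arc term because $v$ and $\partial_n v$ are bounded on $|w|=1$ independently of $R$) are all correct, and your remarks on boundary zeros/poles and on continuity in $r$ handle the edge cases properly. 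In short: your proposal is correct and fills in what the paper simply cites.
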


For any pair of real numbers $\alpha$ and $\beta$ in $[0, 2\pi)$ with $0<\beta-\alpha\le 2\pi,$
$$\Xi(\alpha, \beta;r)=\{z=te^{i\theta}: \alpha<\theta<\beta, 1<t\le r(sin(k(\theta-\alpha)))^{1/k}\}.$$

Let $f$ be a nonconstant meromorphic function on $\Omega(\alpha, \beta)$. We define 
\begin{align*}
 \mathfrak N_{\alpha \beta}(r, f) &= k\int_{1}^{r}\dfrac{\mathfrak n_{\alpha\beta}(t, f)}{t^{k+1}}dt\\
&=\sum_{1<|b_n|<r(sin(k(\theta-\alpha)))^{1/k}}(\dfrac{sin k(\beta_n-\alpha)}{|b_n|^{k}}-\dfrac{1}{r^k}),
\end{align*}
where the $\mathfrak n_{\alpha \beta}(t, f)$ are the number poles of $f$ in the set $\Xi(\alpha, \beta;t)$ counting with  multiplicity
and $b_n=|b_n|e^{i\beta_n}$ are poles of $f$ in the set $\Xi(\alpha, \beta;t).$ 

The {\it  proximity function} of $f$ on $\Omega(\alpha, \beta) $  is given by
$$ \mathfrak m_{\alpha \beta}(r,f)=\dfrac{1}{2\pi}\int_{arcsin r^{-k}}^{\pi-arcsin r^{-k}} \log^{+}
|f(rsin^{k^{-1}}
\varphi e^{i(\alpha+k^{-1}\varphi)})|\dfrac{d\varphi}{r^k sin^{2}\varphi}.$$
The {\it characteristic function} $\mathfrak T_{\alpha \beta}(r, f)$ of $f$ on $\Omega(\alpha, \beta)$ is defined by 
$$\mathfrak T_{\alpha\beta}(r,f)=\mathfrak m_{\alpha \beta}(r, f)+\mathfrak N_{\alpha \beta}(r, f).$$

\begin{lemma}\cite{JH}\label{lm3}  Let $f$ be a nonconstant meromorphic function in $\Omega(\alpha, \beta).$ Then
\begin{align*}
\mathfrak N_{\alpha \beta}(r, \dfrac{1}{f})- \mathfrak N_{\alpha \beta}(r, f)=\dfrac{1}{2\pi}
\int_{arcsin r^{-k}}^{\pi-arcsin r^{-k}} \log |f(rsin^{k^{-1}}\varphi e^{i(\alpha+k^{-1}\varphi)})|\dfrac{d\varphi}
{r^k sin^{2}\varphi}+O(1).
\end{align*}
\end{lemma}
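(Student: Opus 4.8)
The plan is to reduce the identity to a single application of Green's second identity on the Tsuji disk obtained from $\Xi(\alpha,\beta;r)$ under the standard angular-to-half-plane transformation. After a rotation I may assume $\alpha=0$, so that $k=\pi/\beta$ and the map $w=z^{k}$ carries $\Omega(0,\beta)=\{0<\arg z<\pi/k\}$ biholomorphically onto the upper half plane, with the branch $z=w^{1/k}$ single-valued there. Writing $w=u+iv$, the constraint $1<t\le r(\sin(k\theta))^{1/k}$ defining $\Xi(0,\beta;r)$ becomes $|w|>1$ together with $u^{2}+(v-\tfrac12 r^{k})^{2}\le(\tfrac12 r^{k})^{2}$; that is, $\Xi(0,\beta;r)$ maps onto $R_r:=\Delta_r\cap\{|w|>1\}$, where $\Delta_r$ is the disk of radius $r^{k}/2$ tangent to the real axis at the origin. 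Setting $F(w)=f(w^{1/k})$, each pole $b_n=|b_n|e^{i\beta_n}$ of $f$ corresponds to a pole $w_n=b_n^{k}$ of $F$ with $\arg w_n=k\beta_n\in(0,\pi)$, and I will check that $\dfrac{\sin k\beta_n}{|b_n|^{k}}=\dfrac{\operatorname{Im}w_n}{|w_n|^{2}}$ while the summation condition defining $\mathfrak N_{\alpha\beta}$ amounts exactly to $w_n\in R_r$.

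The key device is the function $h_r(w)=\dfrac{\operatorname{Im}w}{|w|^{2}}-\dfrac{1}{r^{k}}$. Since $\operatorname{Im}w/|w|^{2}=\operatorname{Im}(-1/w)$ is harmonic off the origin and the origin lies outside $R_r$, the function $h_r$ is harmonic on $R_r$; moreover $h_r$ vanishes on the outer arc $\partial\Delta_r$ (there $|w|^{2}=r^{k}\operatorname{Im}w$), and a direct computation gives $|\nabla h_r|=1/|w|^{2}$, so that the outward normal derivative on $\partial\Delta_r$ equals $-1/|w|^{2}$. The decisive point is that the summand of $\mathfrak N_{\alpha\beta}$ is precisely $h_r$ evaluated at the singularity: one has $\mathfrak N_{\alpha\beta}(r,f)=\sum h_r(w_n)$ over the poles $w_n\in R_r$ of $F$, and likewise $\mathfrak N_{\alpha\beta}(r,1/f)$ is the same sum over the zeros of $F$ in $R_r$.

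I would then apply Green's second identity to $u=\log|F|$ and $h_r$ on $R_r$, excising small disks around the finitely many zeros and poles of $F$ in the interior in the usual Poisson--Jensen manner. Because $\Delta\log|F|=2\pi(\sum_{\mathrm{zeros}}\delta-\sum_{\mathrm{poles}}\delta)$ and $h_r$ is harmonic, the small-circle contributions assemble into the interior term $2\pi(\mathfrak N_{\alpha\beta}(r,1/f)-\mathfrak N_{\alpha\beta}(r,f))$. On the outer arc $\partial\Delta_r$ the term $h_r\,\partial_n u$ drops out since $h_r=0$, while $-u\,\partial_n h_r=\log|F|/|w|^{2}$; parametrizing by $w(\varphi)=r^{k}\sin\varphi\,e^{i\varphi}$ with $\varphi\in(\arcsin r^{-k},\pi-\arcsin r^{-k})$ gives $ds=r^{k}d\varphi$, $|w|=r^{k}\sin\varphi$, and $w(\varphi)^{1/k}=r\sin^{1/k}\varphi\,e^{i\varphi/k}$, so this boundary term equals $\int \log|f|\,d\varphi/(r^{k}\sin^{2}\varphi)$, namely $2\pi$ times the right-hand side of the lemma.

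What remains is the inner arc $\Gamma_{\mathrm{in}}\subset\{|w|=1\}$, and I expect this to be the main technical obstacle. There $h_r=\operatorname{Im}w-r^{-k}$ and its normal derivative are bounded uniformly in $r$, while $\log|F|$ is a fixed function on the fixed circle $|w|=1$; as $r\to\infty$ the arc $\Gamma_{\mathrm{in}}$ increases to the upper unit semicircle, whose endpoints approach $\pm1$ and stay away from the tangency point $w=0$, so both boundary terms converge to finite constants and contribute only $O(1)$. Some care is needed when $F$ has a zero or pole on $|w|=1$, where $\partial_n\log|F|$ is singular; this is handled by small indentations of the contour, after which the contributions stay bounded. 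Dividing the resulting identity by $2\pi$ yields the stated formula.
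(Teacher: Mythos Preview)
The paper does not prove this lemma; it is quoted from Zheng's monograph \cite{JH} (itself following Tsuji \cite{MT}) without argument, so there is no in-paper proof to compare against.

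Your outline is exactly the classical derivation of the Tsuji--Jensen formula that those references give. The conformal map $w=z^{k}$ sends $\Xi(\alpha,\beta;r)$ to the lune $R_r=\Delta_r\setminus\{|w|\le 1\}$ with $\Delta_r$ the disk of radius $r^{k}/2$ tangent to $\mathbb R$ at the origin; the harmonic weight $h_r(w)=\operatorname{Im}w/|w|^{2}-r^{-k}$ vanishes on $\partial\Delta_r$, has $|\nabla h_r|=|w|^{-2}$ (hence outward normal derivative $-|w|^{-2}$ there, since $h_r>0$ inside and $\nabla h_r\perp\partial\Delta_r$), and evaluates at each singularity to the summand of $\mathfrak N_{\alpha\beta}$. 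Green's identity with $\log|F|$ then produces the stated boundary integral after your parametrization $w(\varphi)=r^{k}\sin\varphi\,e^{i\varphi}$, which is correct. The inner-arc contribution is indeed the only technical point: since $h_r$ and $\partial_n h_r$ are uniformly bounded on $\{|w|=1\}$ for $r\ge 2$, the arc increases monotonically to the fixed upper unit semicircle, and the indentation around any zero or pole of $F$ on $|w|=1$ contributes at most a bounded amount (the $\log|F|\,\partial_n h_r$ piece is $O(\epsilon\log\epsilon)\to 0$ and the $h_r\,\partial_n\log|F|$ piece is $O(1)$), the $O(1)$ follows. Your proof is correct and coincides with the standard one being cited.
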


\begin{lemma}\cite{GO}\label{lm4}  Let $k$ be a natural number and $f$ be nonconstant meromorphic function on $\C.$ Then
we have the estimate
$$S_{\alpha \beta}(r, \dfrac{f^{(k)}}{f})\le O(\log T(r, f)+\log r)$$
 holds for $1<r<\infty$ outside a set of finite measure.
\end{lemma}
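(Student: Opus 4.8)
The plan is to reproduce the angular analogue of Nevanlinna's lemma on the logarithmic derivative in the spirit of Goldberg--Ostrovskii, in four moves.

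\emph{First, reduce to $k=1$.} I would write
$$\frac{f^{(k)}}{f}=\prod_{j=1}^{k}\frac{(f^{(j-1)})'}{f^{(j-1)}}$$
and exploit that each of $A_{\alpha\beta}$, $B_{\alpha\beta}$, $C_{\alpha\beta}$ is subadditive under multiplication (since $\log^{+}|uv|\le\log^{+}|u|+\log^{+}|v|$, and the pole multiplicity of a product at a point is at most the sum of the individual pole multiplicities). This gives
$$S_{\alpha\beta}\Big(r,\frac{f^{(k)}}{f}\Big)\le\sum_{j=1}^{k}S_{\alpha\beta}\Big(r,\frac{(f^{(j-1)})'}{f^{(j-1)}}\Big),$$
so it suffices to bound $S_{\alpha\beta}(r,g'/g)=O(\log T(r,g)+\log r)$ for $g$ meromorphic on $\C$ and then put $g=f^{(j-1)}$. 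The classical estimate $T(r,f^{(j)})\le (j+1)T(r,f)+S(r,f)$ yields $\log T(r,f^{(j-1)})=\log T(r,f)+O(1)$ off a set of finite measure, and the finite union of these exceptional sets is still finite; this turns the bounds for $g=f^{(j-1)}$ into the asserted one in terms of $T(r,f)$.

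\emph{Second, transfer to a half-plane.} After the rotation $z\mapsto e^{-i\alpha}z$ I may assume $\alpha=0$, $\beta=\pi/k$, so that $w=z^{k}$ maps $\Omega(\alpha,\beta)$ conformally onto the upper half-plane $\mathbb H$. Setting $G(w)=g(w^{1/k})$, the combination $A_{\alpha\beta}+B_{\alpha\beta}$ and the counting $C_{\alpha\beta}$ become, by the very construction of the angular characteristic, the Tsuji proximity and counting functions of $G$ on $\mathbb H$. Since $g'/g=k\,z^{k-1}(G'/G)(z^{k})$, one has $\log^{+}|g'/g|\le\log^{+}|(G'/G)(z^{k})|+(k-1)\log^{+}|z|+O(1)$, and the term $(k-1)\log^{+}|z|$ contributes only $O(\log r)$ to $A_{\alpha\beta}+B_{\alpha\beta}$ because the angular weights integrate $\log|z|$ to $O(\log r)$.

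\emph{Third, estimate the proximity of $G'/G$ on $\mathbb H$.} Starting from the Poisson--Jensen representation of $\log|G|$ on a disk exhaustion of $\mathbb H$ --- equivalently from the Carleman formula (Lemma \ref{lm1}) applied to $g$ --- I would differentiate to express $G'/G$ as a boundary integral plus a sum over the zeros and poles of $G$, then apply Jensen's concavity inequality to move the logarithm outside the defining integrals. This reduces $\log^{+}|G'/G|$ to the logarithm of a derivative of the characteristic, which a Borel-type growth lemma bounds by $O(\log\mathfrak T(r,G)+\log r)$ for $r$ outside a set of finite measure. The standard comparison $\log S_{\alpha\beta}(r,g)=O(\log T(r,g)+\log r)$ for $g$ meromorphic on $\C$ then replaces $\log\mathfrak T(r,G)$ by $\log T(r,g)$, giving $A_{\alpha\beta}(r,g'/g)+B_{\alpha\beta}(r,g'/g)=O(\log T(r,g)+\log r)$.

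\emph{The main obstacle} is the pole-counting term $C_{\alpha\beta}(r,g'/g)$. Unlike on the plane, where only the proximity is small, controlling the full angular characteristic forces me to bound the weighted angular count of the (simple) poles of $g'/g$, which sit at the zeros and poles of $g$. The damping weight $\rho_n^{-k}$ in $C_{\alpha\beta}$ helps, and Carleman's formula (Lemma \ref{lm1}) lets me trade $C_{\alpha\beta}(r,g/g')$ against $C_{\alpha\beta}(r,g'/g)$ through already-estimated boundary integrals; but matching this count against $\log T(r,g)$ uniformly in $r$ --- most acutely when $g$ has order larger than $k$, where the weighted count can grow faster than $\log r$ --- is the step I expect to be genuinely delicate and the crux of the whole argument.
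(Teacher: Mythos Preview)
The paper supplies no proof here; the lemma is simply quoted from Goldberg--Ostrovskii \cite{GO}, so there is no argument in the paper to compare your outline against.

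That said, your instincts about the ``main obstacle'' are exactly right, and in fact the obstacle is fatal: the inequality is false as written with the full $S_{\alpha\beta}$. The poles of $f'/f$ are the zeros and poles of $f$, each simple, so with $\omega=\pi/(\beta-\alpha)$ one has
\[
C_{\alpha\beta}\Bigl(r,\tfrac{f'}{f}\Bigr)=2\!\!\sum_{\substack{1\le\rho_n\le r\\ a_n\in f^{-1}(0)\cup f^{-1}(\infty)}}\!\!\Bigl(\rho_n^{-\omega}-\rho_n^{\omega}r^{-2\omega}\Bigr)\sin\omega(\psi_n-\alpha).
\]
If $f$ is entire of order $\rho>\omega$ with its zeros placed on the bisector of the angle (so the sine factor equals $1$), this sum is $\asymp r^{\rho-\omega}$, while $\log T(r,f)+\log r\asymp\log r$; no exceptional set of finite measure can rescue the estimate. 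What Goldberg--Ostrovskii actually prove, and what the present paper actually uses (see the proof of Lemma~\ref{lm11}, display~(\ref{ct3.3aa}), where only $B_{\alpha\beta}$ and the analogous $A_{\alpha\beta}$ contributions are needed), is the bound for the \emph{proximity part} alone:
\[
A_{\alpha\beta}\Bigl(r,\tfrac{f^{(k)}}{f}\Bigr)+B_{\alpha\beta}\Bigl(r,\tfrac{f^{(k)}}{f}\Bigr)=O(\log T(r,f)+\log r)
\]
outside a set of finite measure. Your first three moves --- reduction to $k=1$, conformal transfer to the half-plane, and differentiation of the Poisson--Jensen/Carleman representation followed by a Borel-type growth lemma --- are a correct outline of the standard proof of \emph{this} version. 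The fourth step cannot be completed, not because your strategy is wrong, but because the $C_{\alpha\beta}$ term is genuinely large; the defect is in the stated lemma, which is harmlessly over-stated for the paper's purposes.
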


\begin{lemma}\cite{JH}\label{lm6}   Let $k$ be a natural number and $f$ be nonconstant meromorphic function on $\Omega(\alpha, \beta).$ Then we have the estimate
$$\mathfrak m_{\alpha \beta}(r, \dfrac{f^{(k)}}{f})\le O(\log \mathfrak T_{\alpha \beta}(r, f)+\log r)$$
 holds for $1<r<\infty$ outside a set of finite measure.
\end{lemma}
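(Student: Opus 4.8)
To prove Lemma~\ref{lm6}, the plan is to transcribe Nevanlinna's proof of the classical lemma on the logarithmic derivative into the Tsuji setting, with the Tsuji--Jensen formula (Lemma~\ref{lm3}) playing the role of the ordinary Jensen formula. It helps to organise the computation through the conformal map $w=\phi(z):=(ze^{-i\alpha})^{k}$, which sends $\Omega(\alpha,\beta)$ onto the upper half plane $\{\operatorname{Im}w>0\}$, sends each Tsuji region $\Xi(\alpha,\beta;r)$ onto $\{\,|w|>1,\ |w-\tfrac12 r^{k}i|\le\tfrac12 r^{k}\,\}$, sends the outer boundary curve of $\Xi(\alpha,\beta;r)$ onto an arc of the circle $|w-\tfrac12 r^{k}i|=\tfrac12 r^{k}$, and turns the Tsuji weight $d\varphi/(r^{k}\sin^{2}\varphi)$ into $|dw|/|w|^{2}$. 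Since $f'/f=\bigl((F'/F)\circ\phi\bigr)\phi'$ with $F=f\circ\phi^{-1}$, and $|\phi'(z)|=k|z|^{k-1}$ with $1\le|z|\le r$ on the outer boundary curve of $\Xi(\alpha,\beta;r)$, the factor $\phi'$ contributes only $O(\log r)$ to any Tsuji proximity function, so it suffices to treat $F$ on this exhausting family of disks.

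The main step is the first-form estimate: there is a constant $C=C(k,\alpha,\beta)$ with
$$\mathfrak m_{\alpha\beta}\!\Bigl(r,\tfrac{f'}{f}\Bigr)\le C\Bigl(\log^{+}\mathfrak T_{\alpha\beta}(\rho,f)+\log^{+}\tfrac{1}{\rho-r}+\log^{+}\rho+1\Bigr)\qquad(1<r<\rho<\infty).$$
Starting from Lemma~\ref{lm3} one first obtains a Poisson--Jensen representation of $\log|f(z)|$ for $z$ on the outer boundary curve of $\Xi(\alpha,\beta;r)$, namely a Poisson integral of $\log|f|$ over the outer boundary curve of $\Xi(\alpha,\beta;\rho)$ plus the Green's-function contributions of the zeros and poles of $f$ lying in $\Xi(\alpha,\beta;\rho)\setminus\overline{\Xi(\alpha,\beta;r)}$. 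Differentiating this formula, estimating $|f'/f|$ on the inner curve, taking $\log^{+}$, and integrating against the Tsuji weight (whose total mass over the curve is $O(1)$) --- all exactly as in Nevanlinna's argument --- yields the first-form bound: here $\log^{+}\mathfrak T_{\alpha\beta}(\rho,f)$ enters both through the boundary term, controlled by $\mathfrak m_{\alpha\beta}(\rho,f)+\mathfrak m_{\alpha\beta}(\rho,1/f)\le 2\mathfrak T_{\alpha\beta}(\rho,f)+O(1)$ (using the Tsuji first main theorem $\mathfrak T_{\alpha\beta}(\rho,1/f)=\mathfrak T_{\alpha\beta}(\rho,f)+O(1)$, itself deduced from Lemma~\ref{lm3}), and through the number of intervening zeros and poles, which the first main theorem bounds by $O(\rho^{k}\mathfrak T_{\alpha\beta}(\rho,f))$ after the usual intermediate-radius bookkeeping; the terms $\log^{+}(\rho-r)^{-1}$ and $\log^{+}\rho$ come from the Poisson kernel and from the Jacobian $\phi'$.

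The rest is routine. Put $\psi(r):=\mathfrak T_{\alpha\beta}(r,f)+\log r$, which is nondecreasing and tends to $\infty$; by a Borel growth lemma there is a set $E\subset(1,\infty)$ of finite linear measure off which $\psi(r+\psi(r)^{-1})\le 2\psi(r)$, so the choice $\rho=r+\psi(r)^{-1}$ turns the first-form estimate into $\mathfrak m_{\alpha\beta}(r,f'/f)=O(\log\mathfrak T_{\alpha\beta}(r,f)+\log r)$ for $r\notin E$. (When $\mathfrak T_{\alpha\beta}(r,f)$ stays bounded the claim is immediate, since then $f'/f$ has bounded Tsuji growth and $\mathfrak m_{\alpha\beta}(r,f'/f)=O(\log r)$ directly.) For general $k$ write $f^{(k)}/f=\prod_{j=1}^{k}(f^{(j)}/f^{(j-1)})$, apply the case $k=1$ to each factor --- after checking inductively, from the $k=1$ estimate and $\mathfrak T_{\alpha\beta}(r,f^{(j)})\le\mathfrak T_{\alpha\beta}(r,f^{(j)}/f^{(j-1)})+\mathfrak T_{\alpha\beta}(r,f^{(j-1)})+O(1)$, that $\mathfrak T_{\alpha\beta}(r,f^{(j)})=O(\mathfrak T_{\alpha\beta}(r,f)+\log r)$ off a finite-measure set --- and sum, using $\mathfrak m_{\alpha\beta}(r,g_{1}g_{2})\le\mathfrak m_{\alpha\beta}(r,g_{1})+\mathfrak m_{\alpha\beta}(r,g_{2})+O(1)$.

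The hard part is the first-form estimate, and inside it the behaviour of the Poisson kernel near the two endpoints of the outer boundary curve, where the Tsuji weight $1/(r^{k}\sin^{2}\varphi)$ blows up: one must check that the kernel vanishes fast enough there that these endpoint contributions remain $O(\log\rho+1)$, and that the Green's-function weighting of the zeros and poles is precisely what the Tsuji counting functions $\mathfrak N_{\alpha\beta}$ record. Once this matching is in place everything else is a line-by-line transcription of Nevanlinna's proof; this angular/Tsuji logarithmic-derivative estimate is in any event available in the literature and is quoted here from~\cite{JH}.
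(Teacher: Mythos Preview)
The paper does not prove this lemma at all: it is stated with the citation \cite{JH} and no proof is supplied, exactly as with Lemmas~\ref{lm1}, \ref{lm3}, and \ref{lm4}. So there is no ``paper's own proof'' to compare your attempt against; the result is simply imported from Zheng's monograph.

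Your sketch is a reasonable outline of how the Tsuji logarithmic-derivative lemma is established in the literature: pass to the half-plane (or disk) picture via the conformal map $z\mapsto (ze^{-i\alpha})^{k}$, derive a Poisson--Jensen representation compatible with the Tsuji weight, differentiate to get a first-form bound involving an auxiliary radius $\rho>r$, then remove $\rho$ by a Borel growth lemma, and finally reduce general $k$ to $k=1$ via the telescoping product $f^{(k)}/f=\prod_{j=1}^{k} f^{(j)}/f^{(j-1)}$. You correctly flag the one genuinely delicate point---matching the endpoint behaviour of the kernel against the blow-up of the Tsuji weight $1/(r^{k}\sin^{2}\varphi)$---and you also correctly note in your last sentence that the result is quoted from \cite{JH}. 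Since the paper offers no argument of its own, that final observation is in fact the entire content of what the paper does here.
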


\section{Proofs of Theorems}
\def\theequation{3.\arabic{equation}}
\setcounter{equation}{0}

\begin{proof}[Proof of Theorem \ref{th1} and Theorem \ref{th3}]
First, we prove the Theorem \ref{th1}. Note that $C_{\alpha\beta}(r, Q(f))=0.$ By the definitions of 
$S_{\alpha\beta, f}(r)$, $A_{\alpha \beta, f}(r,D)$, $ B_{\alpha\beta, f}(r,D)$
 and apply to Lemma \ref{lm1} for $Q(f) \not\equiv 0$, we have
\begin{align*}
C_{\alpha \beta, f}(r,D) &= \dfrac{k}{\pi} \int_1^{r}(\dfrac{1}{t^k}-\dfrac{t^k}{r^{2k}})
[\log |Q(f)(te^{i\alpha})|+\log |Q(f)(te^{i\beta})|]\dfrac{dt}{t}\\
&+\dfrac{2k}{\pi r^k}\int_{\alpha}^{\beta}\log |Q(f)(re^{i\varphi})|.sin(k(\varphi-\alpha))d\varphi+O(1).
\end{align*}
Hence, we get
\begin{align*}
A_{\alpha \beta, f}(r,D)&+B_{\alpha \beta, f}(r,D)+C_{\alpha \beta, f}(r, D)\\
&=\dfrac{k}{\pi}\int_{1}^{r}(\dfrac{1}{t^k}-\dfrac{t^k}{r^{2k}}) \log \dfrac {\|f(te^{i\alpha})\|^d\|f(te^{i\beta})\|^d}
{|Q(f)(te^{i\alpha})Q(f)(re^{i\beta})|} \dfrac{dt}{t}\\
&\quad +\dfrac{2k}{\pi r^{k}}\int_{\alpha}^{\beta} \log \dfrac {\|f(re^{i\varphi})\|^d}{|Q(f)(re^{i\varphi})|}
sin (k(\varphi-\alpha)) d\varphi\\
&\quad +\dfrac{k}{\pi} \int_1^{r}(\dfrac{1}{t^k}-\dfrac{t^k}{r^{2k}})[\log |Q(f)(te^{i\alpha})|+\log |Q(f)(te^{i\beta})|]
\dfrac{dt}{t}\\
&\quad+\dfrac{2k}{\pi r^k}\int_{\alpha}^{\beta}\log |Q(f)(re^{i\varphi})|.sin(k(\varphi-\alpha))d\varphi+O(1)
\end{align*}
\begin{align*}
&=\dfrac{k}{\pi}\int_{1}^{r}(\dfrac{1}{t^k}-\dfrac{t^k}{r^{2k}}) \log \|f(te^{i\alpha})\|^d\|f(te^{i\beta})\|^d \dfrac{dt}{t}\\
&\quad+\dfrac{2k}{\pi r^{k}}\int_{\alpha}^{\beta} \log \|f(re^{i\varphi})\|^d. sin (k(\varphi-\alpha)) d\varphi+O(1)\\
&=dS_{\alpha \beta, f}(r)+O(1).
\end{align*}
This is conclusion of Theorem \ref{lm1}.

The end, we prove Theorem \ref{th3}. We have $\mathfrak N_{\alpha\beta}(r, Q(f))=0.$ By the definitions of 
$\mathfrak T_{\alpha\beta, f}(r)$, $\mathfrak m_{\alpha \beta, f}(r,D)$, $ \mathfrak N_{\alpha\beta, f}(r,D)$
 and apply to Lemma \ref{lm3} for $Q(f) \not\equiv 0$, we have
\begin{align*}
\mathfrak N_{\alpha \beta, f}(r,D) &=\dfrac{1}{2\pi}\int_{arcsin r^{-k}}^{\pi-arcsin r^{-k}}
 \log |Q(f)(rsin^{k^{-1}}\varphi e^{i(\alpha+k^{-1}\varphi)})|\dfrac{d\varphi}{r^k sin^{2}\varphi}+O(1).
\end{align*}
Thus, we obtain
\begin{align*}
\mathfrak m_{\alpha \beta, f}(r,D)&+\mathfrak N_{\alpha \beta, f}(r,D)\\
&=\dfrac{1}{2\pi}\int_{arcsin r^{-k}}^{\pi-arcsin r^{-k}} \log |Q(f)(rsin^{k^{-1}}\varphi e^{i(\alpha+k^{-1}\varphi)})| \dfrac{d\varphi}{r^k sin^2\varphi}\\
&\quad +\dfrac{1}{2\pi}\int_{arcsin r^{-k}}^{\pi-arcsin r^{-k}} \log \dfrac {
\|f(rsin^{k^{-1}}\varphi e^{i(\alpha+k^{-1}\varphi)})\|^d}{|Q(f)(
rsin^{k^{-1}}\varphi e^{i(\alpha+k^{-1}\varphi)})|} \dfrac{d\varphi}{r^k sin^{2}\varphi}+O(1)\\
&=\dfrac{1}{2\pi}\int_{arcsin r^{-k}}^{\pi-arcsin r^{-k}} \log\|f(rsin^{k^{-1}}\varphi e^{i(\alpha+k^{-1}\varphi)})\|^d \dfrac{d\varphi}{r^k sin^2\varphi}+O(1)\\
&=d\mathfrak T_{\alpha \beta, f}(r)+O(1).
\end{align*}
We have completed the proof of Theorem \ref{th3}.

\end{proof}

In order to prove the Theorem \ref{th4}, and Theorem \ref{th6}, we need some lemmas. 
First we recall the property of Wronskian.

Let  $f_0, f_1, \dots, f_n$ be meromorphic functions on complex plane $\mathbb C,$
then Wronskian of $f_0, f_1, \dots, f_n$ is defined by
\[  W(f_0,\dots,f_n) =
\left|\begin{array}{cccc}
f_{0}(z)&f_{1}(z)&\cdot & f_{n}(z)\\
f'_{0}(z)&f'_{1}(z)&\cdot & f'_{n}(z)\\
\vdots&\vdots&\ddots&\vdots\\
f^{(n)}_{0}(z)&f^{(n)}_{1}(z)&\cdot & f^{(n)}_{n}(z)\\
\end{array}
\right|.
\]
\begin{lemma}\cite{IL} \label{lm10} 
Let $f_0, f_1, \dots, f_n$ be meromorphic functions on $\mathbb C,$ then
$$ W(f_0, f_1, \dots, f_n)=f_0^{n+1}W(1, \dfrac{f_1}{f_0}, \dots, \dfrac{f_n}{f_0}).$$
\end{lemma}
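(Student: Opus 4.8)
The plan is to establish the identity by a direct manipulation of the Wronskian determinant, using the multilinearity of the determinant together with the product rule for derivatives. First I would write $g_i = f_i/f_0$ for $i = 1, \dots, n$, so that $f_i = f_0 g_i$ (and $f_0 = f_0 \cdot 1$). The entries in the $j$-th row of $W(f_0, f_1, \dots, f_n)$ are then $f_i^{(j)} = (f_0 g_i)^{(j)}$, which by the Leibniz rule expands as $\sum_{\ell=0}^{j} \binom{j}{\ell} f_0^{(j-\ell)} g_i^{(\ell)}$ (with the convention $g_0 = 1$). The key observation is that this expresses the $j$-th row of the matrix for $W(f_0, \dots, f_n)$ as a linear combination of the rows of the matrix for $W(1, g_1, \dots, g_n)$, where the coefficient of the $\ell$-th such row is $\binom{j}{\ell} f_0^{(j-\ell)}$, and in particular the coefficient of the $j$-th row itself is $\binom{j}{j} f_0 = f_0$.

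The main step is then the following linear-algebra fact: if a matrix $B$ is obtained from a matrix $A$ by replacing each row $R_j$ (rows indexed $0, 1, \dots, n$) by a combination $c_j R_j + (\text{lower-index rows})$, then $\det B = \big(\prod_{j=0}^{n} c_j\big)\det A$. This follows because the change of rows is realized by left-multiplication by a lower-triangular matrix whose diagonal entries are the $c_j$; its determinant is $\prod_{j=0}^n c_j$. Here $A$ is the matrix with rows $\big(1, g_1, \dots, g_n\big)^{(j)}$ whose determinant is $W(1, g_1, \dots, g_n)$, $B$ is the matrix with rows $\big(f_0, f_1, \dots, f_n\big)^{(j)}$ whose determinant is $W(f_0, f_1, \dots, f_n)$, and every $c_j$ equals $f_0$. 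Since there are $n+1$ rows, the product of the diagonal entries is $f_0^{n+1}$, which yields
$$
W(f_0, f_1, \dots, f_n) = f_0^{n+1}\, W\!\left(1, \frac{f_1}{f_0}, \dots, \frac{f_n}{f_0}\right).
$$

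I do not expect a serious obstacle here; the only point requiring a little care is the bookkeeping in the Leibniz expansion and the verification that the transition matrix is genuinely lower triangular with constant diagonal $f_0$ — which is immediate once one notes that in $f_i^{(j)} = \sum_{\ell \le j} \binom{j}{\ell} f_0^{(j-\ell)} g_i^{(\ell)}$ only derivatives $g_i^{(\ell)}$ with $\ell \le j$ appear. One should also observe that the identity is purely formal (an identity of meromorphic functions away from the zeros and poles of $f_0$, hence everywhere by analytic continuation), so no convergence or regularity issues arise; the statement for meromorphic $f_i$ follows from the holomorphic case in the standard way. Alternatively, if one prefers to avoid the transition-matrix argument, the identity can be proved by induction on $n$ using the known recursion $W(h_0, \dots, h_n) = h_0^{n+1}\,W\!\big((h_1/h_0)', \dots, (h_n/h_0)'\big)$ for the Wronskian, but the row-operation argument above is the cleanest route.
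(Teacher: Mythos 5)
Your argument is correct and complete: writing $f_i=f_0g_i$ with $g_i=f_i/f_0$, the Leibniz expansion $f_i^{(j)}=\sum_{\ell\le j}\binom{j}{\ell}f_0^{(j-\ell)}g_i^{(\ell)}$ does exhibit the matrix underlying $W(f_0,\dots,f_n)$ as $L$ times the matrix underlying $W(1,g_1,\dots,g_n)$, where $L$ is lower triangular over the field of meromorphic functions with every diagonal entry equal to $f_0$, so the determinants differ by the factor $f_0^{n+1}$; your remark that the identity need only be checked off the zero set and pole set of $f_0$ disposes of the (implicit) assumption $f_0\not\equiv 0$ and of any regularity worries. Note, however, that the paper itself offers no proof to compare against: Lemma~\ref{lm10} is simply quoted from Laine's book \cite{IL}. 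Your row-operation/transition-matrix argument is precisely the standard textbook proof of this classical identity, so there is no substantive divergence to report; the only cosmetic caution is that your closing alternative (induction via $W(h_0,\dots,h_n)=h_0^{n+1}W((h_1/h_0)',\dots,(h_n/h_0)')$) is essentially a restatement of the lemma combined with $W(1,u_1,\dots,u_n)=W(u_1',\dots,u_n')$, so it should not be offered as an independent proof route, but since you only mention it as an aside this does not affect the validity of your main argument.
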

 Let $f=(f_0:\dots:f_n): \mathbb C \ax \PP^n(\mathbb C)$ be holomorophic curve, then Wronskian of $f$ is defined by
$$W = W(f) = W(f_0,\dots,f_n).$$

We denote $C_{\alpha \beta, W}(r,0)$ by the counting function in zeros of $W(f_0,\dots,f_n)$ in $\overline \Omega(\alpha, \beta),$
 this means
$$C_{\alpha \beta, W}(r,0) = C_{\alpha\beta}(r,\dfrac{1}{W})+O(1).$$
We use the notation $\mathfrak N_{W}(r,0)$ talking the counting function in zeros of $W(f_0,\dots,f_n)$ in $\text{Imz}\ge 0,$
 namely
$$ \mathfrak N_{W}(r,0) = \mathfrak N(r,\dfrac{1}{W})+O(1).$$
We call $\mathfrak N_{\alpha \beta, W}(r,0)$ the counting function in zeros of $W(f_0,\dots,f_n)$ in $ \Omega(\alpha, \beta),$ namely
$$ \mathfrak N_{\alpha \beta, W}(r,0) = \mathfrak N_{\alpha\beta}(r,\dfrac{1}{W})+O(1).$$

Let $L_0,\dots,L_{n}$ are linearly independent forms  of $z_0,\dots,z_n$. For $j=0,\dots,n$, set $$F_j(z):= L_j(f(z)).$$ By the property of Wronskian there exists a constant $C \ne 0$ such that
$$|W(F_0,\dots,F_n)| =C|W(f_{0},\dots ,f_{n})|.$$

\begin{lemma}\label{lm11} Let $ f=(f_{0}:\dots:f_{n}) : \C \ax \PP^{n}(\C)$  be a linearly non-degenerate
 holomorphic curve  and $H_{1},\dots , H_{q}$ be hyperplanes in $\PP^{n}(\C)$ in general position. Then we have
\begin{align*}
\| \quad &\dfrac{k}{\pi}\int_{1}^{r}(\dfrac{1}{t^k}-\dfrac{t^k}{r^{2k}})\max_{K}\sum_{j \in K} \log \dfrac 
{\|f(te^{i\alpha})\|}{|(a_j,f)(te^{i\alpha})|} \dfrac{dt}{t} \\
&\quad+ \dfrac{k}{\pi}\int_{1}^{r}(\dfrac{1}{t^k}-\dfrac{t^k}{r^{2k}})\max_{K}\sum_{j \in K} \log \dfrac 
{\|f(te^{i\beta})\|}{|(a_j,f)(te^{i\beta})|} \dfrac{dt}{t} \\
&\quad+\dfrac{2k}{\pi r^{k}}\int_{\alpha}^{\beta}\max_{K}\sum_{j \in K} \log \dfrac {\|f(re^{i\varphi})\|}{|(a_j,f)
(re^{i\varphi})|}sin (k(\varphi-\alpha)) d\varphi\\
&\leqslant (n+1)S_{\alpha\beta, f}(r)- C_{\alpha\beta, W}(r,0) +O(\log T_f(r)+\log r).
\end{align*}
Here the maximum is taken over all subsets $K$ of $\{1,\dots,q\}$ such that $a_{j}$, $j\in K$, are linearly independent.
\end{lemma}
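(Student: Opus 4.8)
The plan is to transplant the classical Cartan second‑main‑theorem argument to the angular (Tsuji) setting; the only external inputs are the Carleman formula (Lemma~\ref{lm1}), the Wronskian identity recalled just before the lemma together with Lemma~\ref{lm10}, and the angular logarithmic derivative lemma (Lemma~\ref{lm4}). The first step is a pointwise inequality on $\overline\Omega(\alpha,\beta)$. Fix $z$ and an admissible index set $K$, i.e.\ one for which the forms $a_j$, $j\in K$, are linearly independent. Since $H_1,\dots,H_q$ are in general position, any $n+1$ of the $a_j$ are linearly independent, so $K$ can be enlarged to cardinality $n+1$; as each summand satisfies $\log\frac{\|f(z)\|}{|(a_j,f)(z)|}\ge-\log\|a_j\|=O(1)$, this enlargement changes $\sum_{j\in K}\log\frac{\|f(z)\|}{|(a_j,f)(z)|}$ only by $O(1)$, so it suffices to treat $|K|=n+1$. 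For such $K$ put $F_j:=(a_j,f)$; then $W(F_j:j\in K)=c_K\,W(f_0,\dots,f_n)$ with $c_K\ne0$, hence, writing $W:=W(f_0,\dots,f_n)$,
\[
\sum_{j\in K}\log\frac{\|f(z)\|}{|(a_j,f)(z)|}=\log\frac{\|f(z)\|^{\,n+1}}{|W(z)|}+\log\frac{|W(F_j:j\in K)(z)|}{|c_K|\,\prod_{j\in K}|F_j(z)|}.
\]
Maximizing over the finitely many admissible $(n+1)$-subsets and replacing $\log$ by $\log^+$ in the last term yields the pointwise bound $\max_K\sum_{j\in K}\log\frac{\|f\|}{|(a_j,f)|}\le\log\frac{\|f\|^{\,n+1}}{|W|}+\sum_K\log^+\frac{|W(F_j:j\in K)|}{\prod_{j\in K}|F_j|}+O(1)$.

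Next I would apply to both sides the monotone linear functional
\[
\Lambda(r,h):=\frac{k}{\pi}\int_1^r\Big(\frac1{t^k}-\frac{t^k}{r^{2k}}\Big)\big[h(te^{i\alpha})+h(te^{i\beta})\big]\frac{dt}{t}+\frac{2k}{\pi r^k}\int_\alpha^\beta h(re^{i\varphi})\sin\!\big(k(\varphi-\alpha)\big)\,d\varphi,
\]
whose kernels are positive on $1\le t<r$ and $\alpha<\varphi<\beta$, which carries $\max_K\sum_{j\in K}\log\frac{\|f\|}{|(a_j,f)|}$ precisely to the left side of the lemma, and which satisfies $\Lambda(r,\log^+|g|)=A_{\alpha\beta}(r,g)+B_{\alpha\beta}(r,g)\le S_{\alpha\beta}(r,g)$. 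Splitting $\log\frac{\|f\|^{n+1}}{|W|}=(n+1)\log\|f\|-\log|W|$: by the definition of the angular characteristic one has $\Lambda\big(r,(n+1)\log\|f\|\big)=(n+1)S_{\alpha\beta,f}(r)+O(1)$, while Lemma~\ref{lm1} applied to the entire function $W$ (no poles, so $C_{\alpha\beta}(r,W)=0$) gives $\Lambda(r,\log|W|)=C_{\alpha\beta}(r,1/W)-C_{\alpha\beta}(r,W)+O(1)=C_{\alpha\beta,W}(r,0)+O(1)$. Thus this term contributes $(n+1)S_{\alpha\beta,f}(r)-C_{\alpha\beta,W}(r,0)+O(1)$, which is the main part of the asserted inequality.

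The hard part will be to show $\Lambda\big(r,\log^+\frac{|W(F_j:j\in K)|}{\prod_{j\in K}|F_j|}\big)=O(\log T_f(r)+\log r)$ for each admissible $K$: the individual Nevanlinna characteristics $T(r,F_j)$ need not be dominated by $T_f(r)$, so Lemma~\ref{lm4} cannot be applied directly to the $F_j$. I would get around this by exploiting the scale invariance $\frac{W(hF_j:j\in K)}{\prod_{j\in K}(hF_j)}=\frac{W(F_j:j\in K)}{\prod_{j\in K}F_j}$: labelling $K=\{j_0,\dots,j_n\}$ and setting $g_l:=(a_{j_l},f)/(a_{j_0},f)$ for $l=1,\dots,n$, the quotient equals $\frac{W(1,g_1,\dots,g_n)}{g_1\cdots g_n}=\frac{W(g_1',\dots,g_n')}{g_1\cdots g_n}=\sum_{\sigma}\operatorname{sgn}(\sigma)\prod_{i=1}^{n}\frac{g_{\sigma(i)}^{(i)}}{g_{\sigma(i)}}$, so that $\log^+\frac{|W(F_j:j\in K)|}{\prod_{j\in K}|F_j|}\le\sum_{i,l}\log^+\big|\frac{g_l^{(i)}}{g_l}\big|+O(1)$. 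Applying $\Lambda$, using $\Lambda(r,\log^+|g|)\le S_{\alpha\beta}(r,g)$, iterating $\frac{g_l^{(i)}}{g_l}=\prod_{m=1}^{i}\frac{(g_l^{(m-1)})'}{g_l^{(m-1)}}$ and invoking Lemma~\ref{lm4} gives $S_{\alpha\beta}\big(r,\frac{g_l^{(i)}}{g_l}\big)=O(\log T(r,g_l)+\log r)$ off a set of finite measure; and since each $g_l$ is a ratio of linear forms evaluated along $f$, $T(r,g_l)\le T_f(r)+O(1)$. As there are only $O(1)$ subsets $K$ and $O(1)$ summands, the whole remainder is $O(\log T_f(r)+\log r)$. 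Collecting the contributions from the previous two paragraphs then gives the lemma, the symbol $\|$ recording the finite exceptional set inherited from Lemma~\ref{lm4}.
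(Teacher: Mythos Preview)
Your argument is correct and follows essentially the same route as the paper's proof: the pointwise Wronskian identity $\sum_{j\in K}\log\frac{\|f\|}{|(a_j,f)|}=\log\frac{\|f\|^{n+1}}{|W|}+\log\frac{|W(F_j)|}{\prod|F_j|}+O(1)$, the reduction of the logarithmic-derivative term via the ratios $g_l=(a_{j_l},f)/(a_{j_0},f)$ together with Lemma~\ref{lm4}, and the Carleman formula (Lemma~\ref{lm1}) applied to the entire function $W$ are exactly the ingredients the paper uses. The only difference is presentational: you package the three integrals into a single positive linear functional $\Lambda$ and argue once, whereas the paper handles the arc integral and the two radial integrals separately before recombining them.
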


\begin{proof}
First, we prove
\begin{align}\label{ct3a1}
\int_{\alpha}^{\beta}&\max_{K}\sum_{j \in K} \log \dfrac {\|f(re^{i\varphi})\|}{|(a_j,f)
(re^{i\varphi})|}sin (k(\varphi-\alpha)) d\varphi\notag\\
&\le (n+1) \int_{\alpha}^{\beta} \log \|f(re^{i\varphi})\|. sin (k(\varphi-\alpha)) 
d\varphi\notag\\
&\quad-(n+1)\int_{\alpha}^{\beta} \log |W(f_0, \dots, f_n)(re^{i\varphi})|. sin (k(\varphi-\alpha)) d\varphi\notag\\
&\quad +O(\log T_f(r)+\log r).
\end{align}
Let $K \subset \{1,\dots,q\}$ such that $a_j, j \in K,$ are linearly independent. Without loss 
of generality, we may assume that $ q\geqslant n+1$ and $\# K=n+1.$ Let $\mathcal T$ is the set of all injective maps
 $\mu:\{ 0,1,\dots,n\} \ax \{1,\dots,q\}$. Then we have
\begin{align*}
\int_{\alpha}^{\beta}\max_{K}\sum_{j \in K}&\log \dfrac {\|f(re^{i\varphi})\|}{|(a_j,f)(re^{i\varphi})|}. sin (k(\varphi-\alpha)) d\varphi \\
&= \int_{\alpha}^{\beta}\max_{\mu \in \mathcal  T}\sum_{j=0}^n\log \dfrac {\|f(re^{i\varphi})\|}{|(a_{\mu(j)},f)(re^{i\varphi})|}.
sin (k(\varphi-\alpha)) d\varphi\\
&= \int_{\alpha}^{\beta} \log \bigg\{\max_{\mu \in \mathcal  T} \dfrac {\|f(re^{i\varphi})\|^{n+1}}{\prod\limits_{j=0}^n
|(a_{\mu(j)},f)(re^{i\varphi})|}\bigg\} sin (k(\varphi-\alpha)) d\varphi +O(1)\\
&\leqslant \int_{\alpha}^{\beta} \log \sum_{\mu \in  \mathcal  T}\dfrac {\|f(re^{i\varphi})\|^{n+1}}{\prod\limits_{j=0}^n
|(a_{\mu(j)},f)
(re^{i\varphi})|} sin (k(\varphi-\alpha)) d\varphi+O(1).\\
\end{align*}
Thus, we obtain
\begin{align*}
\int_{\alpha}^{\beta}&\max_{K}\sum_{j \in K} \log \dfrac {\|f(re^{i\varphi})\|}{|(a_j,f)(re^{i\varphi})|}sin (k(\varphi-\alpha)) d\varphi 
 \\
&\le \int_{\alpha}^{\beta} \log \sum_{\mu \in \mathcal  T}\dfrac {|W((a_{\mu(0)},f),\dots,(a_{\mu(n)},f))(re^{i\varphi})|}
{\prod\limits_{j=0}^n|(a_{\mu(j)},f)(re^{i\varphi})|}sin (k(\varphi-\alpha)) d\varphi\\
& \quad+\int_{\alpha}^{\beta}\log \sum_{\mu \in \mathcal  T}\dfrac{\|f(re^{i\varphi})\|^{n+1}}
{|W((a_{\mu(0)},f),\dots,(a_{\mu(n)},f))(re^{i\varphi})|} sin (k(\varphi-\alpha)) d\varphi+O(1).
\end{align*}
By the property of Wronskian, we see that
$$|W((a_{\mu(0)},f),\dots,(a_{\mu(n)},f))| =C|W(f_{0},\dots ,f_{n})|,$$
where $C \ne 0$ is constant. 

Thus, we get
\begin{align}\label{ct3.2}
\int_{\alpha}^{\beta}&\max_{K}\sum_{j \in K}\log \dfrac {\|f(re^{i\varphi})\|}{|(a_j,f)(re^{i\varphi})|} sin (k(\varphi-\alpha)) d\varphi\notag \\
&\leqslant \int_{\alpha}^{\beta}\log \sum_{\mu \in T}\dfrac {|W((a_{\mu(0)},f),\dots,(a_{\mu(n)},f))(re^{i\varphi})|}
{\prod\limits_{j=0}^n|(a_{\mu(j)},f)(re^{i\varphi})|} sin (k(\varphi-\alpha)) d\varphi\notag \\
& \qquad+\int_{\alpha}^{\beta}\log \dfrac{\|f(re^{i\varphi})\|^{n+1}} {|W(f_0,\dots,f_n)(re^{i\varphi})|}
 sin (k(\varphi-\alpha)) d\varphi+O(1).
\end{align}

Take
$$ g_{\mu(j)}= \dfrac{(a_{\mu(j)},f)}{(a_{\mu(0)},f)}, j=1, \dots, n.$$
Apply to Lemma \ref{lm10},  we have
\begin{align}\label{ct3a2}
\dfrac {W((a_{\mu(0)},f),\dots,(a_{\mu(n)},f))}{\prod\limits_{j=0}^n(a_{\mu(j)},f)} 
&= \dfrac{W(1, \dfrac{(a_{\mu(1)},f)}{(a_{\mu(0)},f)}, \dots, \dfrac{(a_{\mu(n)},f)}{(a_{\mu(0)},f)})}
{\dfrac{(a_{\mu(1)},f)}{(a_{\mu(0)},f)}\dots \dfrac{(a_{\mu(n)},f)}{(a_{\mu(0)},f)} }\notag\\
&=\left|\begin{array}{cccc}
1& 1&\dots & 1\\
0& \dfrac{g'_{\mu(1)}}{ g_{\mu(1)}}&\dots & \dfrac{g'_{\mu(n)}}{ g_{\mu(n)}}\\
\vdots&\vdots&\ddots&\vdots\\
0 & \dfrac{g^{(n)}_{\mu(1)}}{ g_{\mu(1)}}&\dots & \dfrac{g^{(n)}_{\mu(n)}}{ g_{\mu(n)}}
\end{array}
\right|.
\end{align}
For each $j\in \{1, \dots, n\}$ and $k\in \mathbb N^{*},$ using Lemma \ref{lm4}, we have the inequality as following

\begin{align}\label{ct3.3aa}
\| \quad B_{\alpha\beta}(r, \dfrac{g_{\mu(j)}^{(k)}}{g_{\mu(j)}})\le S_{\alpha\beta}\bigg(r,
\dfrac{g_{\mu(j)}^{(k)}}{g_{\mu(j)}}\bigg)&\leqslant O(\log r + \log T(r, g_{\mu(j)})).
\end{align}
Futhermore $T(r, g_{\mu(j)})\le T_f(r)+O(1).$ Then from (\ref{ct3.3aa}), we have
$$\| \quad B_{\alpha\beta}(r, \dfrac{g_{\mu(j)}^{(k)}}{g_{\mu(j)}})
 \leqslant O(\log r+\log T_f(r)).$$
Hence for any $\mu \in \mathcal  T$ and from (\ref{ct3a2}), we have 
\begin{align*}
\| \quad \int_{\alpha}^{\beta}\log^+&\dfrac {|W((a_{\mu(0)},f),\dots,(a_{\mu(n)},f))(re^{i\varphi})|}
{\prod\limits_{j=0}^n|(a_{\mu(j)},f)(re^{i\varphi})|} sin (k(\varphi-\alpha)) d\varphi\\
 &\leqslant O(\log r+\log T_{f}(r)).
\end{align*}
This implies that
\begin{align}\label{ct3.4}
\| \quad \int_{\alpha}^{\beta} \log &\sum_{\mu \in \mathcal  T} 
\dfrac {|W((a_{\mu(0)},f),\dots,(a_{\mu(n)},f))(re^{i\varphi})|}
{\prod\limits_{j=0}^n|(a_{\mu(j)},f)(re^{i\varphi})|}sin (k(\varphi-\alpha)) d\varphi \\
&\leqslant \int_{\alpha}^{\beta} \log^+\sum_{\mu \in \mathcal  T}
 \dfrac {|W((a_{\mu(0)},f),\dots,(a_{\mu(n)},f))(re^{i\varphi})|}
{\prod\limits_{j=0}^n|(a_{\mu(j)},f)(re^{i\varphi})|}sin (k(\varphi-\alpha)) d\varphi \notag\\
&\leqslant  \sum_{\mu \in \mathcal  T}\int_{\alpha}^{\beta}  \log^+\dfrac
 {|W((a_{\mu(0)},f),\dots,(a_{\mu(n)},f))(re^{i\varphi})|}
{\prod\limits_{j=0}^n|(a_{\mu(j)},f)(re^{i\varphi})|}sin (k(\varphi-\alpha)) d\varphi+O(1) \notag\\
& \leqslant O(\log r+\log T_{f}(r)).\notag
\end{align}
Combining (\ref{ct3.2}) and (\ref{ct3.4}), we get the inequality (\ref{ct3a1}). Similarly, we obtain
\begin{align}\label{ct3a3}
\int_{1}^{r}&\max_{K}\sum_{j \in K} (\dfrac{1}{t^k}-\dfrac{t^k}{r^{2k}})\log \dfrac {\|f(te^{i\alpha})\|}{|(a_j,f)
(te^{i\alpha})|} \dfrac{dt}{t}\notag\\
&\le (n+1) \int_{1}^{r} (\dfrac{1}{t^k}-\dfrac{t^k}{r^{2k}}) \log \|f(te^{i\alpha})\| \dfrac{dt}{t} \notag\\
&\quad-(n+1)\int_{1}^{r} (\dfrac{1}{t^k}-\dfrac{t^k}{r^{2k}}) \log |W(f_0, \dots, f_n)(te^{i\alpha})|. \dfrac{dt}{t}\notag\\
&\quad +O(\log T_f(r)+\log r).
\end{align}
and
\begin{align}\label{ct3a4}
&\int_{1}^{r}\max_{K}\sum_{j \in K} (\dfrac{1}{t^k}-\dfrac{t^k}{r^{2k}})\log \dfrac {\|f(te^{i\beta})\|}{|(a_j,f)
(te^{i\beta})|} \dfrac{dt}{t}\notag\\
&\le (n+1) \int_{1}^{r} (\dfrac{1}{t^k}-\dfrac{t^k}{r^{2k}}) \log \|f(te^{i\beta})\| \dfrac{dt}{t} \notag\\
&\quad -(n+1)\int_{1}^{r} (\dfrac{1}{t^k}-\dfrac{t^k}{r^{2k}}) \log |W(f_0, \dots, f_n)(te^{i\beta})|. \dfrac{dt}{t}\notag\\
 &\quad +O(\log T_f(r)+\log r).
\end{align}
We may obtain the conclusion of Lemma \ref{lm11} by adding (\ref{ct3a1}), (\ref{ct3a3}) and (\ref{ct3a4}) and note that 
\begin{align*}
C_{\alpha\beta, W}(r,0)&=\dfrac{2k}{\pi r^k}\int_{\alpha}^{\beta} \log |W(f_0, \dots, f_n)(re^{i\varphi})|. sin (k(\varphi-\alpha))
 d\varphi\\
&\quad + \dfrac{k}{\pi}\int_{1}^{r} (\dfrac{1}{t^k}-\dfrac{t^k}{r^{2k}}) \log |W(f_0, \dots, f_n)(te^{i\alpha})|. \dfrac{dt}{t}\\
&\quad +\dfrac{k}{\pi} \int_{1}^{r} (\dfrac{1}{t^k}-\dfrac{t^k}{r^{2k}}) \log |W(f_0, \dots, f_n)(te^{i\beta})|.
 \dfrac{dt}{t}.
\end{align*}
We have completed the proof of this lemma.
\end{proof}

\begin{lemma}\label{lm12} Let $ f=(f_{0}:\dots:f_{n}) : \C \longrightarrow \PP^{n}(\C)$  be a linearly
 non-degenerate holomorphic curve  and $H_{1},\dots ,H_{q}$ be hyperplanes in $\PP^{n}(\C)$ in general position. 
Let $a_j$ be the vector associated with $H_j$ for $j=1,\dots,q$. Then
\begin{align*}
\sum_{j=1}^{q}[A_{\alpha\beta, f}(r,H_{j})&+B_{\alpha\beta, f}(r, H_j)] \\
&\leqslant \dfrac{2k}{\pi r^{k}}
\int_{\alpha}^{\beta}(\dfrac{1}{t^k}-\dfrac{t^k}{r^{2k}})\max_{K}\sum_{j \in K} \log \dfrac {\|f(re^{i\varphi})\|}{|(a_j,f)
(re^{i\varphi})|}sin (k(\varphi-\alpha)) d\varphi\\
&\quad+ \dfrac{k}{\pi}\int_{1}^{r}(\dfrac{1}{t^k}-\dfrac{t^k}{r^{2k}})\max_{K}\sum_{j \in K} \log \dfrac 
{\|f(te^{i\alpha})\|}{|(a_j,f)(te^{i\alpha})|} \dfrac{dt}{t} \\
&\quad+ \dfrac{k}{\pi}\int_{1}^{r}(\dfrac{1}{t^k}-\dfrac{t^k}{r^{2k}})\max_{K}\sum_{j \in K} \log \dfrac 
{\|f(te^{i\beta})\|}{|(a_j,f)(te^{i\beta})|} \dfrac{dt}{t}+O(1).
\end{align*}
\end{lemma}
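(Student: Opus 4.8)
The plan is to reduce the statement to a pointwise (in $\varphi$, resp. in $t$) comparison of sums of logarithms, exactly as in the classical proof of the "first main theorem with maximum" over an arc. Fix $j\in\{1,\dots,q\}$ and $a_j$ the coefficient vector of $H_j$. By the standard estimate $|(a_j,f)(z)|\le \|a_j\|\,\|f(z)\|$, one has
$$\log^+\frac{|(a_j,f)(z)|}{\|f(z)\|}\le O(1),$$
so that each summand in $A_{\alpha\beta,f}(r,H_j)$ and $B_{\alpha\beta,f}(r,H_j)$ may be written as $\log\frac{\|f(z)\|}{|(a_j,f)(z)|}+O(1)$ up to the truncation $\log^+$. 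First I would rewrite
$$\sum_{j=1}^q\log^+\frac{\|f(z)\|}{|(a_j,f)(z)|}$$
and show that it is bounded above by $\max_K\sum_{j\in K}\log\frac{\|f(z)\|}{|(a_j,f)(z)|}+O(1)$, where the maximum runs over subsets $K$ with $\#K=n+1$ and $\{a_j:j\in K\}$ linearly independent — this is the Nochka/Cartan-type combinatorial lemma in the form used by Ru, and it is valid pointwise in $z$. The $O(1)$ is uniform in $z$ because the hyperplanes are fixed and in general position, so $|(a_j,f)(z)|/\|f(z)\|$ is bounded below by a positive constant for all but the $n$ indices realizing the smallest values.

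Next I would substitute this pointwise bound into the three integral expressions defining $A_{\alpha\beta,f}$ and $B_{\alpha\beta,f}$. Recall that, by definition,
$$B_{\alpha\beta,f}(r,H_j)=\frac{2k}{\pi r^k}\int_\alpha^\beta\log\frac{\|f(re^{i\varphi})\|}{|(a_j,f)(re^{i\varphi})|}\,\sin(k(\varphi-\alpha))\,d\varphi$$
up to $O(1)$, and similarly $A_{\alpha\beta,f}(r,H_j)$ splits into the two line integrals over the rays $\arg z=\alpha$ and $\arg z=\beta$. Summing over $j=1,\dots,q$ and pulling the sum inside each integral, the integrand $\sum_{j=1}^q\log\frac{\|f\|}{|(a_j,f)|}$ is replaced by its upper bound $\max_K\sum_{j\in K}\log\frac{\|f\|}{|(a_j,f)|}$; since $\sin(k(\varphi-\alpha))\ge 0$ on $[\alpha,\beta]$ and the weights $\frac1{t^k}-\frac{t^k}{r^{2k}}$ are nonnegative on $[1,r]$, the inequality is preserved after integration. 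Collecting the three resulting terms gives precisely the right-hand side of the asserted inequality, with all the discarded $\log^+$-truncation errors and the combinatorial $O(1)$ absorbed into a single $O(1)$.

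I expect the only genuinely delicate point to be the justification that the error terms really are $O(1)$ — uniformly bounded in $r$ — rather than merely $O(\log r+\log T_f(r))$. This hinges on the fact that the $H_j$ are \emph{fixed} hyperplanes (so $(a_j,f)$ are honest linear combinations of the $f_i$, not moving targets), together with the elementary inequality $|(a_j,f)(z)|\le\|a_j\|\,\|f(z)\|$ which makes $\log^+\frac{|(a_j,f)|}{\|f\|}$ bounded pointwise by a constant depending only on the $a_j$; integrating such a constant against the bounded measures $\frac{2k}{\pi r^k}\sin(k(\varphi-\alpha))\,d\varphi$ and $\frac{k}{\pi}(\frac1{t^k}-\frac{t^k}{r^{2k}})\frac{dt}{t}$ (both of which have total mass $O(1)$ independent of $r$) yields an $O(1)$ contribution, so no Nevanlinna-theoretic error term is needed here. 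Everything else is the same bookkeeping with $\log^+$ versus $\log$ that appears in Lemma \ref{lm11}, and can be carried out verbatim.
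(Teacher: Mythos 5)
Your proposal is correct and takes essentially the same route as the paper's proof: the heart of both arguments is the pointwise estimate coming from general position (inverting the coefficient matrix of any $n+1$ of the $a_j$ gives $\|f(z)\|\le C\max_{0\le t\le n}|(a_{\mu(t)},f)(z)|$, so all but at most $n$ of the ratios $\|f(z)\|/|(a_j,f)(z)|$ are bounded by a constant uniform in $z$), after which one integrates against the nonnegative kernels of bounded total mass and absorbs everything into $O(1)$, with no logarithmic-derivative error term needed. The paper packages this pointwise step as the product inequality $\prod_{j=1}^{q}\frac{\|f(z)\|}{|(a_j,f)(z)|}\le C^{q-n-1}\max_{\mu}\prod_{t=0}^{n}\frac{\|f(z)\|}{|(a_{\mu(t)},f)(z)|}$ rather than citing a Nochka/Cartan-type combinatorial lemma, but the content is identical.
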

\begin{proof} 
Let $a_{j} = (a^{j}_0,\dots,a^{j}_n)$  be the associated vector of $H_{j}$, $1\leqslant j \leqslant q,$ 
and let $\mathcal T$ be the set of all injective maps $\mu:\{ 0,1,\dots,n\} \longrightarrow \{1,\dots ,q\}$. 
By hypothesis, $ H_{1},\dots, H_{q}$ are in general position for any $\mu \in \mathcal T$, then the 
vectors $a_{\mu(0)},\dots,a_{\mu(n)}$ are linearly independent.  

Let $\mu \in T$, we have
\begin{align}\label{ct3.10}
(f,a_{\mu(t)})=a^{\mu(t)}_0f_{0}+\dots +a^{\mu(t)}_nf_{n},\  t=0,1,\dots,n.
\end{align}
Solve the system of linear equations (\ref{ct3.10}), we get
$$ f_{t}=b_{0}^{\mu(t)}(a_{0}^{\mu(t)},f)+\dots +b_{n}^{\mu(t)}(a_{n}^{\mu(t)},f), \ t=0,1,\dots,n,$$
where$ \biggl(b_{j}^{\mu(t)}\biggl)_{t,j=0}^{n}$ is the inverse matrix of $ \biggl(a_{j}^{\mu(t)}\biggl)_{t,j=0}^{n}.$ So there is a constant $C_\mu$ satisfying
$$\| f(z)\| \leqslant C_\mu \max_{0\leqslant t \leqslant n}| (a_{\mu(t)},f)(z)|.$$
Set $C = \max\limits_{\mu \in \mathcal T} C_\mu$. Then for any $\mu \in \mathcal T$, we have
$$\| f(z)\| \leqslant C \max_{0\leqslant t \leqslant n}| (a_{\mu(t)},f)(z)|.$$
For any $z\in\overline \Omega (\alpha, \beta)$, there exists the mapping $\mu\in \mathcal  T$ such that
$$ 0 <  |(a_{\mu(0)},f)(z)| \leqslant  |(a_{\mu(1)},f)(z) | \leqslant \dots .\leqslant | (a_{\mu(n)},f)(z)| \leqslant
 | (a_{j},f)(z)| ,$$ 
 for $j \notin \{\mu(0),\dots,\mu(n)\}$. Hence
$$\prod_{j=1}^{q}\dfrac {\| f(z)\|}{| (a_{j},f)(z)|} \leqslant  C ^{q-n-1}\max _{\mu\in \mathcal  T}\prod_{t=0}^{n}\dfrac {\| f(z)\|}{| (a_{\mu(t)},f)(z)|}.$$
We have

\begin{align*}
\sum_{j=1}^{q}[&A_{\alpha\beta, f}(r,H_{j})+B_{\alpha\beta, f}(r,H_{j})]\\
&=\sum_{j=1}^{q}\dfrac{2k}{\pi r^k}\int_{\alpha}^{\beta}(\dfrac{1}{t^k}-\dfrac{t^k}{r^{2k}}) \log \dfrac {\|f(re^{i\varphi})\|}{|( a_{j},f)
(re^{i\varphi})|}sin (k(\varphi-\alpha)) d\varphi\\
&\quad +\sum_{j=1}^{q}\dfrac{k}{\pi }\int_{1}^{r} (\dfrac{1}{t^k}-\dfrac{t^k}{r^{2k}})\log \dfrac {\|f(te^{i\alpha})\|}{|( a_{j},f)
(te^{i\alpha})|}\dfrac{dt}{t}\\
&\quad +\sum_{j=1}^{q}\dfrac{k}{\pi }\int_{1}^{r}(\dfrac{1}{t^k}-\dfrac{t^k}{r^{2k}}) 
\log \dfrac {\|f(te^{i\beta})\|}{|( a_{j},f)
(te^{i\beta})|}\dfrac{dt}{t}
\end{align*}
\begin{align*}
&= \dfrac{2k}{\pi r^k}\int_{\alpha}^{\beta}(\dfrac{1}{t^k}-\dfrac{t^k}{r^{2k}}) 
\log \prod_{j=1}^{q}\dfrac {\|f(re^{i\varphi})\|}
{|( a_{j},f)(re^{i\varphi})|}sin (k(\varphi-\alpha)) d\varphi\\
&\quad+\dfrac{k}{\pi }\int_{1}^{r} (\dfrac{1}{t^k}-\dfrac{t^k}{r^{2k}})\log \prod_{j=1}^{q}\dfrac {\|f(te^{i\alpha})\|}
{|( a_{j},f)(te^{i\alpha})|}\dfrac{dt}{t}\\
&\quad+\dfrac{k}{\pi }\int_{1}^{r} (\dfrac{1}{t^k}-\dfrac{t^k}{r^{2k}})\log \prod_{j=1}^{q}\dfrac {\|f(te^{i\beta})\|}
{|( a_{j},f)(te^{i\beta})|}\dfrac{dt}{t}.
\end{align*}
Thus
\begin{align*}
\sum_{j=1}^{q}[&A_{\alpha\beta, f}(r,H_{j})+B_{\alpha\beta, f}(r,H_{j})]\\
&\le \dfrac{2k}{\pi r^k}\int_{\alpha}^{\beta}(\dfrac{1}{t^k}-\dfrac{t^k}{r^{2k}}) \log \max _{\mu\in \mathcal  T}\prod_{t=0}^{n}\dfrac {\|f(re^{i\varphi})\|}
{|( a_{\mu(t)},f)(re^{i\varphi})|}sin (k(\varphi-\alpha)) d\varphi\\
&\quad+\dfrac{k}{\pi }\int_{1}^{r} (\dfrac{1}{t^k}-\dfrac{t^k}{r^{2k}})\log \max _{\mu\in \mathcal  T}\prod_{t=0}^{n}\dfrac {\|f(te^{i\alpha})\|}
{|( a_{\mu(t)},f)(te^{i\alpha})|}\dfrac{dt}{t}\\
&\quad+\dfrac{k}{\pi }\int_{1}^{r}(\dfrac{1}{t^k}-\dfrac{t^k}{r^{2k}}) \log \max _{\mu\in \mathcal  T}\prod_{t=0}^{n}
\dfrac {\|f(te^{i\beta})\|}{|( a_{\mu(t)},f)(te^{i\beta})|}\dfrac{dt}{t}+O(1).
\end{align*}
Therefore, we conclude that 
\begin{align*}
\sum_{j=1}^{q}[A_{\alpha\beta, f}(r,H_{j})&+B_{\alpha\beta, f}(r, H_j)] \\
&\le \dfrac{2k}{\pi r^{k}}
\int_{\alpha}^{\beta}(\dfrac{1}{t^k}-\dfrac{t^k}{r^{2k}})\max_{K}\sum_{j \in K} \log \dfrac {\|f(re^{i\varphi})\|}{|(a_j,f)
(re^{i\varphi})|}sin (k(\varphi-\alpha)) d\varphi\\
&\quad+ \dfrac{k}{\pi}\int_{1}^{r}(\dfrac{1}{t^k}-\dfrac{t^k}{r^{2k}})\max_{K}\sum_{j \in K} \log \dfrac 
{\|f(te^{i\alpha})\|}{|(a_j,f)(te^{i\alpha})|} \dfrac{dt}{t} \\
&\quad+ \dfrac{k}{\pi}\int_{1}^{r}(\dfrac{1}{t^k}-\dfrac{t^k}{r^{2k}})\max_{K}\sum_{j \in K} \log \dfrac 
{\|f(te^{i\beta})\|}{|(a_j,f)(te^{i\beta})|} \dfrac{dt}{t}+O(1).
\end{align*}
This is  conclusion of Lemma \ref{lm12}.
\end{proof}

By argument as Lemma \ref{lm11} and Lemma \ref{lm12}, we are easy to get results as following:

\begin{lemma}\label{lm15} Let $ f=(f_{0}:\dots:f_{n}) : \Omega(\alpha, \beta) \ax \PP^{n}(\C)$  be a linearly non-degenerate
 holomorphic curve  and $H_{1},\dots , H_{q}$ be hyperplanes in $\PP^{n}(\C)$ in general position. Then we have
\begin{align*}
\| \quad \dfrac{1}{2\pi}\int_{arcsin r^{-k}}^{\pi-arcsin r^{-k}}\max_{K}\sum_{j \in K}& \log \dfrac
 {\|f(rsin^{k^{-1}}
\varphi e^{i(\alpha+k^{-1}\varphi)})\|}{|(a_j,f)
(rsin^{k^{-1}}
\varphi e^{i(\alpha+k^{-1}\varphi)})|}\dfrac{d\varphi}{r^ksin^2\varphi}\\
&\leqslant (n+1)\mathfrak T_{\alpha\beta, f}(r)- \mathfrak N_{\alpha\beta, W}(r,0)\\
 &\quad +O(\log \mathfrak T_{\alpha\beta, f}(r)+\log r).
\end{align*}
Here the maximum is taken over all subsets $K$ of $\{1,\dots,q\}$ such that $a_{j}$, $j\in K$,
 are linearly independent.
\end{lemma}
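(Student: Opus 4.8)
The plan is to carry over, almost verbatim, the arguments of Lemma \ref{lm11} and Lemma \ref{lm12}, with the angular Jensen formula (Lemma \ref{lm1}) and the angular logarithmic derivative estimate (Lemma \ref{lm4}) replaced by their Tsuji counterparts Lemma \ref{lm3} and Lemma \ref{lm6}, and with every weighted pair of arc--integrals $\frac{2k}{\pi r^k}\int_\alpha^\beta(\cdot)\sin(k(\varphi-\alpha))\,d\varphi+\frac{k}{\pi}\int_1^r(\cdot)\,dt/t$ replaced by the single Tsuji integral $\frac{1}{2\pi}\int_{\arcsin r^{-k}}^{\pi-\arcsin r^{-k}}(\cdot)\,\frac{d\varphi}{r^k\sin^2\varphi}$. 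Abbreviate $\zeta=\zeta(\varphi)=r\sin^{k^{-1}}\varphi\,e^{i(\alpha+k^{-1}\varphi)}$. Since $H_1,\dots,H_q$ are in general position, every $(n+1)$--subset of the associated vectors $a_1,\dots,a_q$ is linearly independent, so the inner maximum over $K$ may be rewritten as a maximum over the finite set $\mathcal T$ of injective maps $\mu:\{0,\dots,n\}\to\{1,\dots,q\}$, and for each $\mu$ one has $\sum_{j=0}^n\log\frac{\|f(\zeta)\|}{|(a_{\mu(j)},f)(\zeta)|}=\log\frac{\|f(\zeta)\|^{n+1}}{\prod_{j=0}^n|(a_{\mu(j)},f)(\zeta)|}$. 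Replacing $\log\max_\mu$ by $\log\sum_\mu$ costs only an $O(1)$ term, with $\#\mathcal T$ a fixed constant depending on $n$ and $q$.

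Next I would introduce the Wronskian $W=W(f_0,\dots,f_n)$, which is entire and, by linear non--degeneracy, satisfies $W\not\equiv0$. Using the identity $|W((a_{\mu(0)},f),\dots,(a_{\mu(n)},f))|=C_\mu|W|$ recalled before Lemma \ref{lm11}, I factor
\[
\frac{\|f(\zeta)\|^{n+1}}{\prod_{j=0}^n|(a_{\mu(j)},f)(\zeta)|}=\frac{1}{C_\mu}\cdot\frac{\|f(\zeta)\|^{n+1}}{|W(\zeta)|}\cdot\frac{|W((a_{\mu(0)},f),\dots,(a_{\mu(n)},f))(\zeta)|}{\prod_{j=0}^n|(a_{\mu(j)},f)(\zeta)|}.
\]
By Lemma \ref{lm10}, with $g_{\mu(j)}:=(a_{\mu(j)},f)/(a_{\mu(0)},f)$ the last factor equals $\bigl|W(1,g_{\mu(1)},\dots,g_{\mu(n)})/\prod_{j=1}^n g_{\mu(j)}\bigr|$, which, upon expanding the determinant exactly as in (\ref{ct3a2}), is a fixed polynomial expression in the logarithmic derivatives $g_{\mu(j)}^{(l)}/g_{\mu(j)}$, $1\le j,l\le n$.

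I would then estimate the Tsuji proximity of this last factor. Each $g_{\mu(j)}$ is meromorphic on $\Omega(\alpha,\beta)$; since $|(a,f)|\le C\|f\|$ for any linear form $a$ and, by Lemma \ref{lm3} together with the definition of $\mathfrak m_{\alpha\beta}$, the first main theorem $\mathfrak T_{\alpha\beta}(r,1/h)=\mathfrak T_{\alpha\beta}(r,h)+O(1)$ holds, the subadditivity of $\mathfrak m_{\alpha\beta}$ and $\mathfrak N_{\alpha\beta}$ gives $\mathfrak T_{\alpha\beta}(r,g_{\mu(j)})=O(\mathfrak T_{\alpha\beta,f}(r))+O(1)$. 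Applying Lemma \ref{lm6} to each $g_{\mu(j)}^{(l)}/g_{\mu(j)}$ and using $\log^{+}$--subadditivity, I obtain
\[
\|\quad\frac{1}{2\pi}\int_{\arcsin r^{-k}}^{\pi-\arcsin r^{-k}}\log^{+}\frac{|W((a_{\mu(0)},f),\dots,(a_{\mu(n)},f))(\zeta)|}{\prod_{j=0}^n|(a_{\mu(j)},f)(\zeta)|}\,\frac{d\varphi}{r^k\sin^2\varphi}\le O(\log\mathfrak T_{\alpha\beta,f}(r)+\log r),
\]
and summing over the finitely many $\mu\in\mathcal T$ keeps the right side of this order.

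Finally, the remaining factor contributes $\frac{1}{2\pi}\int_{\arcsin r^{-k}}^{\pi-\arcsin r^{-k}}\log\frac{\|f(\zeta)\|^{n+1}}{|W(\zeta)|}\,\frac{d\varphi}{r^k\sin^2\varphi}$, which I split as a difference. The $\log\|f(\zeta)\|^{n+1}$ part equals $(n+1)\mathfrak T_{\alpha\beta,f}(r)+O(1)$ by the definition of $\mathfrak T_{\alpha\beta,f}$, exactly as in the computation at the end of the proof of Theorem \ref{th3}. For the $\log|W(\zeta)|$ part, since $W$ is holomorphic we have $\mathfrak N_{\alpha\beta}(r,W)=0$, so Lemma \ref{lm3} applied to $W$ gives $\frac{1}{2\pi}\int_{\arcsin r^{-k}}^{\pi-\arcsin r^{-k}}\log|W(\zeta)|\,\frac{d\varphi}{r^k\sin^2\varphi}=\mathfrak N_{\alpha\beta}(r,1/W)+O(1)=\mathfrak N_{\alpha\beta,W}(r,0)+O(1)$. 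Combining the three contributions yields the asserted bound $(n+1)\mathfrak T_{\alpha\beta,f}(r)-\mathfrak N_{\alpha\beta,W}(r,0)+O(\log\mathfrak T_{\alpha\beta,f}(r)+\log r)$. I expect the only delicate point to be the third step: one must check that the Tsuji logarithmic derivative lemma (Lemma \ref{lm6}) can be applied to each entry of the Wronskian determinant and then summed over $\mu\in\mathcal T$ while keeping the exceptional set of finite measure, and that $\mathfrak T_{\alpha\beta}(r,g_{\mu(j)})$ is genuinely controlled by $\mathfrak T_{\alpha\beta,f}(r)$; the remaining manipulations are a direct transcription of the proofs of Lemma \ref{lm11} and Lemma \ref{lm12}.
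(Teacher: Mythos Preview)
Your proposal is correct and follows exactly the approach the paper indicates: the paper does not give a separate proof of this lemma but simply says ``By argument as Lemma \ref{lm11} and Lemma \ref{lm12}, we are easy to get results as following,'' and you have carried out precisely that transcription, replacing Lemma \ref{lm1} and Lemma \ref{lm4} by their Tsuji analogues Lemma \ref{lm3} and Lemma \ref{lm6}. The one clarification is that only the argument of Lemma \ref{lm11} is needed here (Lemma \ref{lm12} is the model for Lemma \ref{lm16}, not Lemma \ref{lm15}); your actual steps use only the Lemma \ref{lm11} structure, so this is just a wording issue in your plan, and the point you flag as delicate, namely $\mathfrak T_{\alpha\beta}(r,g_{\mu(j)})=O(\mathfrak T_{\alpha\beta,f}(r))$, is handled correctly via Corollary \ref{cor3}.
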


\begin{lemma}\label{lm16} Let $ f=(f_{0}:\dots:f_{n}) : \Omega(\alpha, \beta) \longrightarrow \PP^{n}(\C)$  be a linearly
 non-degenerate holomorphic curve  and $H_{1},\dots ,H_{q}$ be hyperplanes in $\PP^{n}(\C)$ in general position. 
Let $a_j$ be the vector associated with $H_j$ for $j=1,\dots,q$. Then
\begin{align*}
\sum_{j=1}^{q}\mathfrak m_{\alpha\beta, f}(r, H_j)&\le \dfrac{1}{2\pi}\int_{arcsin r^{-k}}^{\pi-arcsin r^{-k}}\max_{K}\sum_{j \in K}
 \log \dfrac{\|f(rsin^{k^{-1}}
\varphi e^{i(\alpha+k^{-1}\varphi)})\|}{|(a_j,f)(rsin^{k^{-1}}
\varphi e^{i(\alpha+k^{-1}\varphi)})|}\dfrac{d\varphi}{r^ksin^2\varphi}\\
&\quad+O(1).
\end{align*}
\end{lemma}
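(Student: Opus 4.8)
The plan is to re-run the pointwise part of the proof of Lemma \ref{lm12} and then integrate against the Tsuji measure on the angle in place of the measures defining $A_{\alpha\beta}$ and $B_{\alpha\beta}$. First I would fix the notation exactly as there: write $a_j=(a_0^j,\dots,a_n^j)$ for the vector associated with $H_j$ and let $\mathcal T$ denote the set of injective maps $\mu:\{0,\dots,n\}\to\{1,\dots,q\}$; since $H_1,\dots,H_q$ are in general position, for every $\mu\in\mathcal T$ the vectors $a_{\mu(0)},\dots,a_{\mu(n)}$ are linearly independent. Solving the linear systems $(f,a_{\mu(t)})=\sum_i a_i^{\mu(t)}f_i$ as in Lemma \ref{lm12} produces a constant $C\ge 1$ with $\|f(z)\|\le C\max_{0\le t\le n}|(a_{\mu(t)},f)(z)|$ for every $\mu\in\mathcal T$ and every $z\in\Omega(\alpha,\beta)$. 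Choosing, for each fixed $z$, the $\mu$ that picks out the $n+1$ smallest of the values $|(a_j,f)(z)|$ then yields the pointwise bound
$$\prod_{j=1}^{q}\frac{\|f(z)\|}{|(a_j,f)(z)|}\le C^{q-n-1}\max_{\mu\in\mathcal T}\prod_{t=0}^{n}\frac{\|f(z)\|}{|(a_{\mu(t)},f)(z)|},$$
and hence, taking logarithms,
$$\sum_{j=1}^{q}\log\frac{\|f(z)\|}{|(a_j,f)(z)|}\le (q-n-1)\log C+\max_{K}\sum_{j\in K}\log\frac{\|f(z)\|}{|(a_j,f)(z)|},$$
where $K$ runs over the subsets of $\{1,\dots,q\}$ whose associated vectors are linearly independent.

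Next I would substitute $z=z(\varphi):=r\sin^{k^{-1}}\varphi\, e^{i(\alpha+k^{-1}\varphi)}$ into this inequality and integrate $\frac{1}{2\pi}\int_{\arcsin r^{-k}}^{\pi-\arcsin r^{-k}}(\,\cdot\,)\frac{d\varphi}{r^k\sin^2\varphi}$. Because a hyperplane $H_j$ has $d=1$ and defining form $(a_j,\cdot)$, the definition of the proximity function turns the left-hand side into exactly $\sum_{j=1}^{q}\mathfrak m_{\alpha\beta,f}(r,H_j)$, while the second term on the right is precisely the integral appearing in the statement. All the integrals converge and the manipulations used (replacing $\log\prod$ by $\sum\log$, moving $\log$ past $\max$) are legitimate, since $|(a_j,f)(z)|\le c_j\|f(z)\|$ for suitable constants $c_j>0$, so each integrand is bounded below by $-\log c_j=O(1)$.

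The only point I would treat with any care — and it is not really an obstacle — is that the term coming from $C^{q-n-1}$ is $O(1)$. It equals $\frac{(q-n-1)\log C}{2\pi}\int_{\arcsin r^{-k}}^{\pi-\arcsin r^{-k}}\frac{d\varphi}{r^k\sin^2\varphi}$, and an elementary computation gives $\int_{\arcsin r^{-k}}^{\pi-\arcsin r^{-k}}\frac{d\varphi}{\sin^2\varphi}=2\cot(\arcsin r^{-k})=2\sqrt{r^{2k}-1}$, so this term is $\frac{(q-n-1)\log C}{\pi}\cdot\frac{\sqrt{r^{2k}-1}}{r^k}\le\frac{(q-n-1)\log C}{\pi}$ for all $r>1$, hence $O(1)$. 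Thus the proof is formally identical to that of Lemma \ref{lm12}, the one genuinely new ingredient being the elementary fact that the total mass $\frac{1}{2\pi}\int_{\arcsin r^{-k}}^{\pi-\arcsin r^{-k}}\frac{d\varphi}{r^k\sin^2\varphi}=\frac{\sqrt{r^{2k}-1}}{\pi r^k}$ of the Tsuji measure stays bounded as $r\to\infty$, which is what keeps the constant $C$ from contributing a term growing with $r$.
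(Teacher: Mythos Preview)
Your proposal is correct and follows exactly the approach the paper intends: the paper does not write out a separate proof of this lemma at all, but simply states that it follows ``by argument as Lemma~\ref{lm11} and Lemma~\ref{lm12}''. Your explicit verification that the Tsuji measure has bounded total mass, so that the constant $C^{q-n-1}$ contributes only $O(1)$, is a point the paper takes for granted but is the one place where the transfer from Lemma~\ref{lm12} is not entirely mechanical.
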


\begin{proof}[Proof of Theorem \ref{th4} and Theorem \ref{th6}]
First, we prove the Theorem \ref{th4}. Using Lemma \ref{lm11} and Lemma \ref{lm12}, we have
\begin{align}\label{ct3.7}
\| \sum_{j=1}^{q}[&A_{\alpha\beta, f}(r,H_{j})+B_{\alpha\beta, f}(r,H_{j})]\notag\\
&\le \dfrac{2k}{\pi r^k}\int_{\alpha}^{\beta}(\dfrac{1}{t^k}-\dfrac{t^k}{r^{2k}}) \log \max _{\mu\in \mathcal  T}\prod_{t=0}^{n}\dfrac {\|f(re^{i\varphi})\|}
{|( a_{\mu(t)},f)(re^{i\varphi})|}sin (k(\varphi-\alpha)) d\varphi\notag\\
&\quad+\dfrac{k}{\pi }\int_{1}^{r} \log \max _{\mu\in \mathcal  T}\prod_{t=0}^{n}\dfrac {\|f(te^{i\alpha})\|}
{|( a_{\mu(t)},f)(te^{i\alpha})|}\dfrac{dt}{t}\notag\\
&\quad+\dfrac{k}{\pi }\int_{1}^{r} \log \max _{\mu\in \mathcal  T}\prod_{t=0}^{n}
\dfrac {\|f(te^{i\beta})\|}{|( a_{\mu(t)},f)(te^{i\beta})|}\dfrac{dt}{t}+O(1)\notag\\
&\le (n+1)S_{\alpha\beta, f}(r)-C_{\alpha\beta, W}(r,0)+O(\log T_f(r)+\log r).
\end{align}
By Corollary \ref{th1}, we get that
$$S_{\alpha\beta,f}(r)=A_{\alpha\beta, f}(r,H_{j})+B_{\alpha\beta, f}(r,H_{j})+C_{\alpha\beta,f}(r, H_j)+O(1)$$
for any $j \in\{1,\dots,q\}$. So from (\ref{ct3.7}), we have
\begin{align}\label{ct3.8}
\| \quad (q-n-1)S_{\alpha\beta,f}(r)\leqslant \sum_{j=1}^{q}C_{\alpha\beta,f}(r,H_{j})- C_{\alpha\beta, W}(r,0)
+O(\log r + \log T_{f}(r)).
\end{align}
For  $z_0 \in \overline \Omega(\alpha, \beta)$, we may assume that $(a_{j},f)$ vanishes at $z_0$ for $ 1\leqslant j \leqslant q_{1}$, $(a_{j},f)$ does not vanish at $z_0$ for $ j > q_{1}$. Hence, there exists a integer $k_{j}$ and nowhere vanishing holomorphic function $ g_{j}$ in neighborhood $U$ of $z$ such that
$$(a_{j},f)(z)=(z-z_0)^{k_{j}}g_{j}(z), \text{ for } j=1,\dots ,q,$$
here $k_{j}=0$ for $q_{1} <j \leqslant q$. We may assume that $ k_{j}\geqslant n$ for  $1\leqslant j \leqslant q_{0}$, and $1\leqslant k_{j}< n$ for $q_{0}< j \leqslant q_{1}.$ By property of the Wronskian, we have
$$W(f)=C.W((a_{\mu(0)},f),\dots .,(a_{\mu(n)},f))=\prod_{j=1}^{q_{0}}(z-z_0)^{k_{j}-n}h(z),$$
where $h(z)$ is holomorphic function on $U$. Then $W(f)$ is vanishes at $z_0$ with order at least $$\sum\limits_{j=1}^{q_{0}} (k_{j}-n)=\sum\limits_{j=1}^{q_{0}} k_{j}-q_{0}n.$$
By the definition of $ C_{\alpha\beta, f}(r,H),\  C_{\alpha\beta, W}(r,0)$ and $ C_{\alpha\beta, f}^{n}(r,H)$, we have
\begin{align*}
\sum_{j=1}^{q}&C_{\alpha\beta, f}(r,H_{j})-C_{\alpha\beta, W}(r,0)\leqslant \sum_{j=1}^{q}C^{n}_{\alpha\beta,f}
(r,H_{j}) +O(1).
\end{align*}
So from (\ref{ct3.8}), we have
$$\| \quad (q-n-1)S_{\alpha\beta, f}(r)\leqslant \sum_{j=1}^{q}C^{n}_{\alpha\beta, f}(r,H_{j})+O(\log r+\log 
T_{f}(r)).$$
The proof of Theorem \ref{th4} is completed. 

Next, we prove the Theorem \ref{th6}. By Lemma \ref{lm15} and Lemma \ref{lm16}, we have
\begin{align}\label{ct3.7a}
\| \sum_{j=1}^{q}\mathfrak m_{\alpha\beta, f}(r,H_{j})&\le \quad \dfrac{1}{2\pi}\int_{arcsin r^{-k}}^{\pi-arcsin r^{-k}}
\max_{K}\sum_{j \in K} \log \dfrac
 {\|f(rsin^{k^{-1}}
\varphi e^{i(\alpha+k^{-1}\varphi)})\|}{|(a_j,f)
(rsin^{k^{-1}}
\varphi e^{i(\alpha+k^{-1}\varphi)})|}\dfrac{d\varphi}{r^ksin^2\varphi}\\
&\leqslant (n+1)\mathfrak T_{\alpha\beta, f}(r)- \mathfrak N_{\alpha\beta, W}(r,0)+O(\log \mathfrak T_{\alpha\beta, f}(r)+\log r).
\end{align}
 Corollary 2 gives that
$$\mathfrak T_{\alpha\beta, f}(r)=\mathfrak m_{\alpha\beta, f}(r,H_{j})+\mathfrak N_{\alpha\beta, f}(r,H_{j})+O(1)$$
for any $j \in\{1,\dots,q\}$. Hence from (\ref{ct3.7a}), we obtain
\begin{align}\label{ct3.8a}
\| \quad (q-n-1)\mathfrak T_{\alpha\beta, f}(r)\leqslant \sum_{j=1}^{q}\mathfrak N_{\alpha\beta, f}(r,H_{j})-
 \mathfrak N_{\alpha\beta, W}(r,0)
+O(\log r + \log \mathfrak T_{\alpha\beta, f}(r)).
\end{align}

For  $z_0 \in \Omega(\alpha, \beta; r)=\Omega(\alpha, \beta)\cap\{1<|z|<r\},$ we may suppose that $(a_{j},f)$ vanishes at $z_0$ for $ 1\leqslant j \leqslant q_{1}$, $(a_{j},f)$ does not vanish at $z_0$ for $ j > q_{1}$. Hence, there exists a integer $k_{j}$ and nowhere vanishing holomorphic function $ g_{j}$ in neighborhood $U$ of $z$ such that
$$(a_{j},f)(z)=(z-z_0)^{k_{j}}g_{j}(z), \text{ for } j=1,\dots ,q,$$
here $k_{j}=0$ for $q_{1} <j \leqslant q$. We may assume that $ k_{j}\geqslant n$ for  $1\leqslant j \leqslant q_{0}$, and $1\leqslant k_{j}< n$ for $q_{0}< j \leqslant q_{1}.$ By property of the Wronskian, we have
$$W(f)=C.W((a_{\mu(0)},f),\dots .,(a_{\mu(n)},f))=\prod_{j=1}^{q_{0}}(z-z_0)^{k_{j}-n}h(z),$$
where $h(z)$ is holomorphic function on $U$. Then $W(f)$ is vanishes at $z_0$ with order at least $$\sum\limits_{j=1}^{q_{0}} (k_{j}-n)=\sum\limits_{j=1}^{q_{0}} k_{j}-q_{0}n.$$
By the definition of $ \mathfrak N_{f}(r,H),\mathfrak N_{W}(r,0)$ and $ \mathfrak N_{f}^{n}(r,H)$, we have
\begin{align*}
\sum_{j=1}^{q}&\mathfrak N_{\alpha\beta, f}(r,H_{j})-\mathfrak N_{\alpha\beta, W}(r,0)\leqslant 
\sum_{j=1}^{q}\mathfrak N^{n}_{\alpha\beta, f}(r,H_{j}) +O(1).
\end{align*}
Thus from (\ref{ct3.8a}), we get the inequality
$$\| \quad (q-n-1)\mathfrak T_{\alpha\beta, f}(r)\leqslant \sum_{j=1}^{q}\mathfrak N^{n}_{\alpha\beta, f}
(r,H_{j})+O(\log r+\log 
\mathfrak T_{\alpha\beta, f}(r)).$$
This is statement of Theorem \ref{th6}. 
\end{proof}

\begin{proof}[Proof of Theorem \ref{Th10}]

Let ${\large \text{f}}= (f_0:\dots: f_N)$ be a reduced representation of $f,$ where $f_0, \dots, f_N$ are entire functions on 
$\Omega(\alpha,\beta)$ and have no common zeros. We consider the function $\phi_i=Q_i\circ {\large \text{f}}=Q_i(f_0, \dots, f_N), 0\le i\le N.$ Let 
 $F=(\phi_0f_0^n:\dots: \phi_Nf_N^n).$ Since the hypersurfaces  $\{\mathcal H_i^nQ_i=0\}, 0\le i\le N,$ are located in general position 
in $\mathbb P^{N}(\mathbb C),$ then $F: \Omega(\alpha, \beta)\to \mathbb P^N(\mathbb C)$ is a holomorphic curve. Let 
 $\mathfrak H_i, 0\le i\le N,$ be the hypersurface defined by $\{\mathcal H_i^nQ_i=0\}, 0\le i\le N.$
From the hypothesis  $\mathfrak H_0, \dots, \mathfrak H_N$ are in general position, i.e.
$$ \text{supp}\mathfrak H_{0} \cap \dots \cap \text{supp}\mathfrak H_N= \emptyset.$$
Thus by Hilbert's Nullstellensatz \cite{VW}, for
any integer k, $0 \le k \le N,$ there is an integer $m_k > n+d$ such that
$$ x_k^{m_k}=\sum_{i=0}^{N}b_{i}(x_0, \dots, x_N)  \mathcal H_i^n(x_0, \dots, x_N)Q_{i}(x_0, \dots, x_N),$$
where $b_{0}, \dots, b_N$ are homogeneous forms with coefficients in $\mathbb C$ of degree $m_k-(n+d).$  
This implies
$$ 
|f_k(z )|^{m_k}\le c_1||{\large \text{f}}(z )||^{m_k-(n+d)}\max\{|\mathcal H_0^nQ_{0}({\large \text{f}}(z ))|, \dots, |\mathcal H_N^nQ_{N}
({\large \text{f}}(z ))|\},$$
where $c_1$ is a positive constant depending only on the coefficients of $b_{i}, 0\le i\le N, 0\le k\le N,$ thus depending only on the coefficients of $Q_i, 0\le i\le N.$ Therefore,
\begin{align}\label{cta2}
||{\large \text{f}}(z )||^{n+d}\le c_1\max\{|\mathcal H_0^nQ_{0}({\large \text{f}}(z ))|, \dots, |\mathcal H_N^nQ_{N}({\large \text{f}}(z ))|\}.
\end{align}

From (\ref{cta2}) and the First Main Theorem, we have
\begin{align}\label{41}
\mathfrak T_{\alpha\beta, F}(r)&\ge (n+d)\mathfrak T_{\alpha\beta, f}(r)+O(1)\notag\\
&\ge (n+d-(N+1)d)\mathfrak T_{\alpha\beta, f}(r)+\sum_{i=0}^{N}\mathfrak N_{\alpha\beta, f}(r, D_i)+O(1)\notag\\
&= (n-Nd)\mathfrak T_{\alpha\beta, f}(r)+\sum_{i=0}^{N}\mathfrak N_{\alpha\beta, f}(r, D_i)+O(1).
\end{align}
On the other hand, by applying Theorem \ref{th6} to $F$, and the hyperplanes 
$$ H_i=\{y_i=0\}, 0\le i\le N,$$
and
$$ H_{N+1}=\{y_0+\dots+y_{N}=0\} $$
yields
\begin{align}\label{42}
\|\mathfrak T_{\alpha\beta, F}(r)\le \sum_{i=0}^{N+1}\mathfrak N_{\alpha\beta, F}^{N}(r, H_i)+o(\mathfrak T_{\alpha\beta, f}(r)).
\end{align}
We have
\begin{align*}
\mathfrak N_{\alpha\beta, F}^{N}(r, H_i)\le \mathfrak N_{\alpha\beta, f}^{N}(r, D_i)+\mathfrak N_{\alpha\beta}^{N}(r, \dfrac{1}{f_i^n})
\end{align*}
for all $i=0, \dots, N,$ where $\mathfrak N^N(r, \dfrac{1}{g})$ is counting function with level of truncation $N$ of $g.$ Hence
\begin{align}\label{43}
\mathfrak N_{\alpha\beta, F}^{N}(r, H_i)&\le \mathfrak N_{\alpha\beta, f}^{N}(r, D_i)+N\overline {\mathfrak N}_{\alpha\beta}
(r, \dfrac{1}{f_i^n})\notag\\
&\le \mathfrak N_{\alpha\beta, f}^{N}(r, D_i)+N\mathfrak T_{\alpha\beta, f}(r)+O(1)
\end{align}

for all $i=0, \dots, N.$ Also note $\mathfrak N_{\alpha\beta, F}^{N}(r, H_{N+1})=\mathfrak N_{\alpha\beta, f}^{N}(r, D).$ 
By combining (\ref{41}) to  (\ref{43}), we obtain 
\begin{align*}
\|(n-(d+N+1)N)\mathfrak T_{\alpha\beta, f}(r)&+\sum_{i=0}^{N}(\mathfrak N_{\alpha\beta, f}(r, D_i)-\mathfrak 
N_{\alpha\beta, f}^{N}(r, D_i))\\
&\le \mathfrak 
N_{\alpha\beta, f}^{N}(r, D)+o(\mathfrak T_{\alpha\beta, f}(r)).
\end{align*}
\end{proof}

\begin{proof}[Proof of Theorem \ref{Th11}]
We suppose that $f\not\equiv g,$ then there are two numbers $i, j \in  \{0, \dots , N\},$ $ i\ne j$ such that
$f_{i}g_{j}\not\equiv f_{j}g_{i}.$ Assume that $z_0 \in \Omega(\alpha, \beta)$ is a zero of $Q(f),$ where $Q$ is a homogeneous
 defining $D.$ From condition $f(z)=g(z)$ when $z\in f^{-1}(D) \cup g^{-1}(D),$ we get $f(z_0)=g(z_0).$ This implies $z_0$ is a zero of $\dfrac{f_{i}}{f_{j}}-\dfrac{g_{i}}{g_{j}}.$ Therefore, we have
\begin{align*}
 \mathfrak {N}_{\alpha\beta, f}^N(r, D) \le N\mathfrak {N}_{\alpha\beta, f}^{1}(r, D)&\le N \mathfrak {N}_{\alpha\beta}(r, \dfrac{1}{ \dfrac{f_{i}}{f_{j}}-\dfrac{g_{i}}{g_{j}}})\\
&\le N(\mathfrak {T}_{\alpha\beta, f}(r)+\mathfrak {T}_{\alpha\beta, g}(r))+O(1).
\end{align*}
Apply to Theorem \ref{Th10}, we obtain
\begin{align}\label{51}
  \|(n-(d+N+1)N)\mathfrak {T}_{\alpha\beta, f}(r)\le N(\mathfrak {T}_{\alpha\beta, f}(r)+\mathfrak {T}_{\alpha\beta, g}(r))+o(\mathfrak {T}_{\alpha\beta, f}(r)).
\end{align}
Similarly, we have
\begin{align}\label{52}
  \|(n-(d+N+1)N)\mathfrak {T}_{\alpha\beta, g}(r)\le N(\mathfrak {T}_{\alpha\beta, f}(r)+\mathfrak {T}_{\alpha\beta, g}(r))+o(\mathfrak {T}_{\alpha\beta, g}(r)).
\end{align}
Combining (\ref{51}) and (\ref{52}), we get
$$ \|(n-(d+N+3)N)(\mathfrak {T}_{\alpha\beta, f}(r)+\mathfrak {T}_{\alpha\beta, g}(r)) \le o(\mathfrak {T}_{\alpha\beta, f}(r))+o(\mathfrak {T}_{\alpha\beta, g}(r)).$$
This is a contradiction with $n>(d+N+3)N.$ Hence $f\equiv g.$
\end{proof}

\noindent{\bf Acknowledgements}\\
 The author thanks to the Proffesor Jian-Hua Zheng for very helpful
 comments and useful suggestions in this paper.

\end{document}